\newcommand{\N}{{\mathbb N}}
\newcommand{\R}{{\mathbb R}}
\newcommand{\C}{{\mathbb C}}
\newcommand{\F}{\mathbb{F}}
\newcommand{\I}{\mathbb{O}}
\newcommand{\M}{\mathbb{M}}
\newcommand{\ds}{\displaystyle}
\newcommand{\avoids}{\;\#\;}
\newcommand{\ep}{\varepsilon}
\newcommand{\be}{\begin{equation}}
\newcommand{\ee}{\end{equation}}
\newtheorem*{thm*}{Theorem}
\newtheorem*{lem*}{Lemma}
\newtheorem{prop}{Proposition}
\newtheorem*{prop*}{Proposition}
\newtheorem*{cor*}{Corollary}
\theoremstyle{definition}
\newtheorem*{defn*}{Definition}
\newtheorem*{example*}{Example}
\newtheorem*{examples*}{Examples}
\newtheorem*{rmk*}{Remark}
\numberwithin{equation}{section}
\begin{document}
\title{The ring of fluxions} 
\author{S.R.~Doty} \address{Mathematics and Statistics, Loyola
  University Chicago, Chicago, Illinois 60626 U.S.A.}
\email{doty@math.luc.edu}

\begin{abstract}\noindent
The ring of fluxions (real sequential germs at infinity) provides a
rigorous approach to infinitesimals, different from the better-known
approach of Abraham Robinson. The basic idea was first espoused in a
paper by Curt Schmieden and Detlof Laugwitz published in
1958. Although this ring codifies all the usual intuitive properties
of infinitesimals in a very elementary way, its existence has been
generally ignored.
\end{abstract}

\maketitle

\section*{Introduction}\noindent
In 1960 Abraham Robinson \cite{R1, R2} discovered that a rigorous
theory of infinitesimal real numbers can be developed in terms of the
existence of nonstandard models of the real number field. Robinson's
approach, which depends on the compactness theorem of first-order
logic, is not easily accessible to the mathematical community at
large.

It is not widely known that two years prior to Robinson's insight,
C.~Schmieden and D.~Laugwitz published a paper \cite{SL} in which a
more elementary approach to infinitesimals was outlined. This approach
is based on the construction of an ordered ring extension $\dot{\R}$
of the real ordered field $\R$, consisting of germs of real sequences
at infinity.  In this article, I have taken the liberty to introduce
the term \emph{fluxions} for such germs, adapting an obsolete term
from \cite{Newton}.

According to \cite[p.~2]{R2}, ``G.W.~Leibnitz argued that the theory
of infinitesimals implies the introduction of ideal numbers which
might be infinitely small or infinitely large compared with real
numbers but which were \emph{to possess the same properties as the
  latter}.'' This of course demands not just an ordered ring extension
but an ordered field extension, which is what Robinson's method
produces. Yet the earlier method of Schmieden and Laugwitz seems to me
more generally accessible and intuitively appealing, and the
difficulties imposed by having a ring instead of a field in which to
house the infinitely small and large quantities are minor.

Although Cauchy is generally credited with the modern epsilon-delta
definition of limit, a look at his 1821 \emph{Cours d'analyse} reveals
no such definition stated explicitly. Instead we find the following
suggestive statements \cite{Cauchy}:
\begin{quote}\small\sf
We call a quantity variable if it can be considered as able to
take on successively many different values.

When the values successively attributed to a particular variable
indefinitely approach a fixed value in such a way as to end up by
differing from it by as little as we wish, this fixed value is called
the limit of all the other values.

When the successive numerical values of the same variable decrease
indefinitely, such as to become less than a given number, this number
becomes what is called an infinitesimal or an infinitesimal
quantity. The limit of this type of variable is zero.
\end{quote}
This language seems consistent with the approach to convergence via
the ring of fluxions, and suggests \cite{L87, L89} that the idea may
have been anticipated by Cauchy. Intuitively, a fluxion is merely a
sequence of real numbers, with an equality rule that says two
sequences define the same fluxion whenever they differ in only a
finite number of terms. A fluxion (\emph{e.g.}, $[1/n]$) is
infinitesimal when it eventually becomes smaller in magnitude than any
given positive real number. Inequalities and real-valued functions of
a real variable extend easily to fluxions.

The purpose of this article is to demonstrate how a number of standard
results of basic analysis could be formulated and proved using the
ring of fluxions. It is far from comprehensive; indeed I was forced to
be rather selective of topics in order to keep the article to a
reasonable length. Once the ring of fluxions has been precisely
defined, and its ordering properties developed, the task of rewriting
standard definitions and arguments in the fluxional language is in
most cases quite straightforward, with no loss of mathematical
rigor. In some instances (\emph{e.g.}  uniform continuity in Section
\ref{sec:continuity}) concepts may seem more intuitively appealing
when expressed in the new language.

\section{The ordered ring $\dot{\R}$ of fluxions}\noindent
Taking the ordered field $\R$ of ordinary real numbers as given
\cite{Landau}, along with its least upper bound property, we wish to
construct an ordered ring extension $\dot{\R}$ of $\R$ which contains
infinitely small and infinitely large quantities. Elements of
$\dot{\R}$ will be called fluxions.

In order to construct the ring $\dot{\R}$, consider first the larger
structure $\R^\N$ of all sequences $x \colon \N \to \R$ of real
numbers. Here, $\N$ is the set of natural numbers; \emph{i.e.}, positive
integers.  As usual, given a sequence $x \colon \N \to \R$ in $\R^\N$,
we also write $(x_n)$ or $(x_n)_{n \in \N}$ or $(x_n)_{n=1}^\infty$
for it.

The set $\R^\N$ is a commutative ring with the usual ring operations
of componentwise addition and componentwise multiplication of
sequences:
\begin{equation} 
  (x_n)+(y_n) = (x_n+y_n); \qquad (x_n)\cdot(y_n) = (x_ny_n).
\end{equation} 
Let $Z$ be the set of sequences $(x_n)$ which are \emph{eventually
  zero}, meaning that $x_n$ is non-zero for only finitely many values
of $n$. In other words, $(x_n) \in Z$ if and only if there is some
natural number $N$ for which $x_n=0$ for all $n \ge N$. It is easily
checked that $Z$ is an ideal in the ring $\R^\N$. We define $\dot{\R}$
to be the corresponding quotient ring.

\begin{defn*}
  $\dot{\R} := (\R^\N/Z)$, as a quotient ring.
\end{defn*}

It is useful to introduce a binary relation on the set $\R^\N$,
denoted by the symbol $\doteq$, by declaring that
\begin{equation*}
  (x_n) \doteq (x'_n) \iff x_n = x'_n \text{ for all but finitely many
    values of } n.
\end{equation*}
Equivalently, $(x_n) \doteq (x'_n)$ if and only if there is some
natural number $N$ such that $x_n=x'_n$ for all $n \ge N$. Thus, the
relation $\doteq$ is the relation of \emph{eventual}, or \emph{almost
  everywhere}, equality. One may easily check that $\doteq$ is an
equivalence relation on $\R^\N$. Thus we say that the sequences
$(x_n)$ and $(x'_n)$ are \emph{equivalent} if and only if $(x_n)
\doteq (x'_n)$. Observe that $(x_n) \doteq (x'_n)$ if and only if
$(x_n-x'_n) \in Z$. So cosets in the quotient ring $\R^\N/Z$ are
precisely the same thing as the equivalence classes under
$\doteq$. Thus, $\dot{\R}$ is the set of equivalence classes of
real-valued sequences under eventual equality. Intuitively, then, a
fluxion may be regarded as a real sequence, with two sequences which
are eventually equal representing the same fluxion.

We denote the image of a sequence $(x_n)$ under the natural quotient
map $\R^\N \to \dot{\R}$ by $[x_n]$. The alternative notations
$[x_n]_{n \in \N}$, $[x_n]_{n=1}^\infty$ could also be used.  Addition
and multiplication of fluxions is therefore defined by
\begin{equation} 
  [x_n]+[y_n] = [x_n+y_n]; \qquad [x_n]\cdot [y_n] = [x_ny_n].
\end{equation} 
There is a natural embedding of $\R$ into $\dot{\R}$, given by sending
a given real number $a$ to the corresponding constant fluxion
$[a]:=[a,a,a,a,\dots]$. Under the embedding, the additive and
multiplicative identities in $\R$ (0 and 1) correspond to the additive
and multiplicative identities in $\dot{\R}$. It is clear that this
embedding is a ring homomorphism; \emph{i.e.}, addition and multiplication of
real numbers corresponds to addition and multiplication of the
corresponding fluxions. Henceforth we will regard ordinary real
numbers as fluxions in this way, identifying them with their
associated constant fluxion.

It is important to notice that the ring $\dot{\R}$ is not a
field. Indeed, the fluxion $[(-1^n)+1] = [0,2,0,2,\dots]$ is a zero
divisor, so $\dot{\R}$ is not even an integral domain.

From now on we will often use single symbols such as $x, y, z$ to
denote fluxions (elements of $\dot{\R}$).  The usual ordering relation
$\le$ on real numbers extends to a (partial) ordering of $\dot{\R}$,
which will be denoted by the same symbol.

\begin{defn*}
  Given fluxions $x=[x_n]$, $y=[y_n]$, define $x \le y$ if and only if
  $x_n \le y_n$ for all but finitely many $n$. Whenever $x \le y$ we
  say that $x$ is \emph{eventually} less than or equal to $y$. We also
  write $x \ge y$ if and only if $y \le x$.  Whenever $x \ge y$ we say
  that $x$ is \emph{eventually} greater than or equal to $y$.
\end{defn*}

Clearly, given real numbers $a,b$ we have $[a] \le [b]$ if and only if
$a \le b$. So, when restricted to pairs of real numbers, $\le$ has the
usual meaning. It is easily checked that $\le$ is a partial order on
$\dot{\R}$. It is not a linear order, however. For example, if $x =
[0,2,0,2,\dots] = [1+ (-1)^n]$ and $y = [2,0,2,0, \dots] = [1 -
  (-1)^n]$ then $x$ and $y$ are incomparable: neither $x \le y$ nor $y
\le x$ holds. 

\begin{defn*}
  Given fluxions $x=[x_n]$, $y=[y_n]$, we write $x < y$ if and only if
  $x_n < y_n$ for all but finitely many $n$. Whenever $x < y$ we say
  that $x$ is \emph{eventually} less than $y$.  We also write $x > y$
  if and only if $y < x$.  Whenever $x > y$ we say that $x$ is
  \emph{eventually} greater than $y$.
\end{defn*}

Obviously $x < y$ implies $x \le y$ and similarly $x > y$ implies $x
\ge y$.  It should immediately be pointed out that in the realm of
fluxions, $x \le y$ and $x \ne y$ do not necessarily imply that $x <
y$. For example, if $x = [0,2,0,2, \dots] = [1+(-1)^n]$ then $x \le 2$
and $x \ne 2$, yet $x \not< 2$ (\emph{i.e.}, $x$ isn't eventually
smaller than 2). Similarly for $\ge$. Thus, a certain amount of care
is required when using the ordering relations of fluxions. The main
properties of $\le$ and $<$ are as follows:
\begin{gather}
 \text{$x \le x$ \quad(reflexivity of $\le$);}\\
 \text{$x \le y$ and $y \le x$ imply $x=y$ \quad(antisymmetry of
$\le$);}\\
\text{$x\le y$ and $y \le z$ imply $x \le z$
\quad(transitivity of $\le$);}\\
\text{$x< y$ and $y < z$ imply $x < z$ \quad(transitivity
of $<$);}\\
\text{$x \le y \iff x-y \le 0$, and $x < y \iff x-y < 0$.}
\end{gather}
Similar statements hold when $\le$ and $<$ are
replaced by $\ge$ and $>$.

\begin{defn*}[Absolute value]
  Given a fluxion $x = [x_n]$, define its magnitude $|x|$ to be the
  fluxion $[\,|x_n|\,]$.
\end{defn*}

Notice that this extends the usual notion of absolute value on real
numbers.  The basic properties of absolute value are essentially the
same for $\dot{\R}$ as for $\R$. More precisely, we have the
following.

\begin{prop}
For any fluxions $x,y \in \dot{\R}$ we have:

\item{\upshape(a)}\quad $|x-y| \ge 0$ with equality if and only if
  $x=y$;
 
\item{\upshape(b)}\quad $|x-y| = |y-x|$; 

\item{\upshape(c)}\quad $|x+y| \le |x|+|y|$ (triangle inequality);

\item{\upshape(d)}\quad If $y \ge 0$ then $|x| \le y$ if and only if
  $-y \le x \le y$, and similarly $|x| < y$ if and only if $-y < x <
  y$.
\end{prop}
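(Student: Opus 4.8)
The plan is to deduce each part from the corresponding elementary inequality for real numbers, using nothing beyond the definitions of $|\cdot|$, $\le$ and $<$ on $\dot{\R}$ together with one structural observation: if one property holds for all but finitely many $n$ and another property holds for all but finitely many $n$, then both hold simultaneously for all but finitely many $n$ (a finite intersection of cofinite subsets of $\N$ is cofinite). This is the sole mechanism needed to pass from a pointwise statement about the defining sequences to a statement about the fluxions they represent, and it is what makes the fluxional versions of (a)--(d) essentially as cheap as their real prototypes.

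First I would dispatch (b) and (c), which are immediate. For \emph{every} $n$ one has $|x_n-y_n| = |y_n-x_n|$, so the sequences $[\,|x_n-y_n|\,]$ and $[\,|y_n-x_n|\,]$ agree term by term, giving (b); and for every $n$ the real triangle inequality gives $|x_n+y_n| \le |x_n|+|y_n|$, which in particular holds for all but finitely many $n$, giving (c). For (a), $|x-y| \ge 0$ likewise follows from $|x_n-y_n| \ge 0$ for all $n$; for the equality clause, observe that the fluxion $|x-y| = [\,|x_n-y_n|\,]$ equals the zero fluxion precisely when $|x_n-y_n| = 0$ for all but finitely many $n$, i.e.\ when $x_n = y_n$ for all but finitely many $n$, which is by construction exactly the statement $x = y$ in $\dot{\R}$.

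Part (d) is the one case that genuinely combines two \emph{eventual} conditions, so it is where the cofinite-intersection remark earns its keep, and it is the only place where a little care is required. For the forward implication, if $|x| \le y$ then $|x_n| \le y_n$ for all $n \ge N$ for some $N$, and for each such $n$ the real equivalence $|a|\le b \iff -b\le a\le b$ yields $-y_n \le x_n \le y_n$, whence $-y \le x$ and $x \le y$. For the converse, if $-y \le x$ and $x \le y$ then, taking the larger of the two thresholds, there is an $N$ beyond which both $-y_n \le x_n$ and $x_n \le y_n$ hold, and for such $n$ the same real equivalence gives $|x_n| \le y_n$, hence $|x| \le y$. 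The strict statement is proved identically with $\le$ replaced by $<$ throughout, and the hypothesis $y \ge 0$ is included only to mirror the real-variable formulation (indeed $|x|\le y$ already forces $y \ge 0$). The one slip to avoid — the ``main obstacle,'' such as it is — is conflating the two ``all but finitely many'' clauses before intersecting the corresponding cofinite index sets; once that is done correctly, each assertion reduces to finitely many invocations of its real-number version.
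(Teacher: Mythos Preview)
Your proof is correct and follows the same approach as the paper, which simply states that ``the proof is immediate from the definitions'' without further elaboration. Your write-up makes explicit the one point worth noting---that the converse in (d) requires intersecting two cofinite index sets---so in fact you have supplied more detail than the paper itself.
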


The proof is immediate from the definitions.

For $x=[x_n]$, $y=[y_n] \in \dot{\R}$, to say that $x \ne y$ means that
$x_n \ne y_n$ for infinitely many values of $n$. There is a stronger
form of this relation, in which inequality eventually holds
everywhere, formulated as follows.
\begin{defn*}
  Let $x=[x_n], y=[y_n] \in \dot{\R}$. We say that $x$ \emph{avoids}
  $y$ (written as $x \avoids y$) if there is some $N$ such that $x_n \ne
  y_n$ for all $n \ge N$.
\end{defn*}

Obviously, if $x\avoids y$ then $x \ne y$, but not conversely. For
example, take $x = 0$ and let $y = [1+(-1)^n]$. Then $x \ne y$ but it
isn't true that $x \avoids y$.

The reader should verify that if $x > 0$ or $x < 0$ then $x \avoids
0$; \emph{i.e.}, if $x$ is eventually positive or eventually negative, then
$x$ avoids $0$.  Fluxions that avoid 0 are important because they are
precisely the invertible elements (units) of the ring $\dot{\R}$.

\begin{prop}
  Let $x, y \in \dot{\R}$. Then $x$ avoids $y$ if and only if $x-y$ is
  invertible in $\dot{\R}$. In particular, $x \in \dot{\R}$ in
  invertible if and only if $x$ avoids $0$.
\end{prop}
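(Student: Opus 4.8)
The plan is to first reduce to the special case $y = 0$. By the definition of ``avoids,'' the statement $x \avoids y$ holds precisely when $x_n - y_n \ne 0$ for all sufficiently large $n$, which is exactly the statement that the fluxion $x - y = [x_n - y_n]$ avoids $0$. Likewise ``$x - y$ is invertible in $\dot{\R}$'' is already in the form of the general claim with $y$ replaced by $0$. So the full biconditional follows at once from the special case, which I would isolate and prove: \emph{a fluxion $z = [z_n]$ avoids $0$ if and only if $z$ is invertible in $\dot{\R}$.} I would state this reduction explicitly, then establish the two implications.

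For the forward direction, suppose $z \avoids 0$, so there is some $N$ with $z_n \ne 0$ for all $n \ge N$. Define a sequence $(w_n)$ by setting $w_n := 1/z_n$ for $n \ge N$ and $w_n := 0$ (the value here is irrelevant) for $n < N$. Then $z_n w_n = 1$ for every $n \ge N$, so the sequences $(z_n w_n)$ and $(1,1,1,\dots)$ agree for all $n \ge N$; hence $[z_n]\cdot[w_n] = [z_n w_n] = [1]$, and $z$ is invertible with inverse $[w_n]$. Conversely, suppose $z$ is invertible, say $[z_n]\cdot[w_n] = [1]$ for some $[w_n] \in \dot{\R}$. By the definition of equality of fluxions, $z_n w_n = 1$ for all but finitely many $n$, i.e., there is some $N$ with $z_n w_n = 1$ for all $n \ge N$. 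But $z_n w_n = 1$ forces $z_n \ne 0$, so $z_n \ne 0$ for all $n \ge N$, which is exactly $z \avoids 0$. The ``in particular'' clause is then just the case $y = 0$ restated.

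I do not anticipate any real obstacle: the only points needing a moment's care are that one may freely redefine the inverse sequence on the finitely many indices where $z_n$ vanishes without changing the fluxion it represents, and that the eventual relation $z_n w_n = 1$ is precisely what ``avoids $0$'' is designed to detect. In effect the proposition is the fluxional shadow of the elementary fact that a real number is invertible if and only if it is nonzero, transported across the quotient map $\R^\N \to \dot{\R} = \R^\N/Z$.
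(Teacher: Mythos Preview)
Your proof is correct and is exactly the straightforward argument the paper has in mind; the paper itself writes only ``The proof is easy'' and gives no details, so there is nothing to compare beyond noting that your reduction to the case $y=0$ and the explicit construction of the inverse sequence are the natural way to fill in that remark.
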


The proof is easy.

Since $\dot{\R}$ is commutative, it makes sense to define the fraction
$\frac{x}{y}$ by $\frac{x}{y} := xy^{-1}$ whenever $y$ is invertible.
Thus division in $\dot{\R}$ is well-defined, provided only that the
denominator is invertible (\emph{i.e.}, avoids 0).

To summarize: fluxions may be regarded as real sequences under the
equivalence of eventual equality (similar to the way rational numbers
are regarded as fractions under an equivalence). Fluxions are added,
subtracted, multiplied, and divided componentwise, and this extends
the usual addition, subtraction, multiplication, and division on real
numbers, which are embedded in $\dot{\R}$ as the constant sequences.
Division makes sense only when the denominator is eventually nonzero
(avoids zero).  Inequalities also extend componentwise, but the
notions make sense only provided that they hold eventually.  Thus the
ring of fluxions merely formalizes the standard operations on
sequences, familiar from basic analysis.

\section{Infinitesimals}\noindent
We are ready to define infinitesimals and apply them to discuss
convergence. This formalizes the customary intuition that most people
develop after achieving an initial understanding of convergence.

In order to foster readability, from now on we will typically write
fluxions as single letters towards the end of the alphabet ($x$, $y$,
$z$, \dots), reserving symbols at the beginning of the alphabet ($a$,
$b$, $c$, \dots) to depict ordinary real numbers. 

\begin{defn*}
  An \emph{infinitesimal} is a fluxion $x$ such that $|x| < a$ for
  every positive real number $a$.  A fluxion $x$ is \emph{finite} (or
  \emph{bounded}) if there exists some real $a>0$ such that $|x| < a$.
  We write $x \to a$ (and say that $x$ \emph{approaches} $a$ or $x$
  \emph{converges} to $a$), for a real number $a$, if the difference
  $x-a$ is infinitesimal. Finally, we say that $x$ \emph{diverges} if
  $x$ does not converge to any real number.
\end{defn*}

When $x \to a$ we also call the real number $a$ the \emph{limit} of
$x$, so we have already defined limits of sequences. One may prefer to
employ one of the more cumbersome notations
\[
\text{$\lim x = a$, $\lim x_n = a$, or $\ds\lim_{n\to \infty} x_n = a$}
\]
as an alternative notation for $x \to a$.

\begin{examples*}
As an example of a nonzero infinitesimal, consider $h = [1/n] \in
\dot{\R}$.  Notice that $x$ is infinitesimal if and only if $|x|$ is
infinitesimal if and only if $-x$ is infinitesimal.  Clearly $0$ is
infinitesimal; moreover, $0$ is the only real infinitesimal.  It is
clear that any real number or infinitesimal is a finite
fluxion. Furthermore, any convergent fluxion is finite, because $x -a
= h$ with $h$ infinitesimal implies that $|x| = |a+h| \le |a|+|h| <
|a|+1$.
\end{examples*}

Algebraic interactions among finite fluxions are encapsulated in the
following result.

\begin{prop}
  The set $\F$ of finite fluxions is a subring of $\dot{\R}$, and the
  set $\I$ of infinitesimals is an ideal in $\F$. The set $\M$ of
  convergent fluxions is a subring of $\F$, and $\M = \R \oplus \I$.
\end{prop}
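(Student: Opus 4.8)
The plan is to check the required closure properties directly, using only the triangle inequality (part (c) of the absolute-value proposition), the transitivity of the order relations, and the compatibility of $\le$ and $<$ with multiplication by a nonnegative fluxion --- the last being immediate from the componentwise definitions. I would dispatch the four assertions in turn, and I expect each to be a short estimate.

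For $\F$: fixing positive reals $a,b$ with $|x|<a$ and $|y|<b$, one has $|{-x}|=|x|<a$ and $|x+y|\le|x|+|y|<a+b$, while $|xy|=|x|\,|y|\le a|y|<ab$ because $|y|\ge 0$ and $a>0$; since $0$ and $1$ are plainly finite, this makes $\F$ a subring. For $\I$: it has already been noted that $\I\subseteq\F$; if $x,y\in\I$ then $|x+y|\le|x|+|y|<a$ for any real $a>0$ (apply the infinitesimal property to $a/2$) and $|{-x}|=|x|$, so $\I$ is an additive subgroup; for absorption, given $z\in\F$ with $|z|<b$ and $x\in\I$ and any real $a>0$, the infinitesimal property of $x$ applied to $a/b$ gives $|zx|=|z|\,|x|\le b|x|<a$, so $zx\in\I$ and $\I$ is an ideal in $\F$.

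For $\M$: convergent fluxions are finite, so $\M\subseteq\F$; writing $x=a+h$ and $y=b+k$ with $h,k\in\I$, we get $x+y=(a+b)+(h+k)$ and $-x=(-a)+(-h)$ with $h+k,-h\in\I$ by the $\I$ step, and $xy=ab+(ak+bh+hk)$ with $ak+bh+hk\in\I$ because $\I$ is an ideal in $\F$ and $a,b,h,k\in\F$; together with $1\to 1$ this shows $\M$ is a subring. Finally, $x=a+(x-a)$ exhibits $\M=\R+\I$ (and conversely $a+h\to a$ for $h\in\I$), while $\R\cap\I=\{0\}$ since $0$ is the only real infinitesimal; this gives the internal direct sum $\M=\R\oplus\I$, and uniqueness of the decomposition is automatic, since $a+h=a'+h'$ forces $a-a'=h'-h\in\R\cap\I=\{0\}$.

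I do not anticipate a genuine obstacle: the whole argument is a chain of one-line estimates resting on the triangle inequality. The only spot calling for slight care is the pair of multiplicative estimates --- bounding $|xy|$ from separate bounds on $|x|$ and $|y|$, and choosing the threshold $a/b$ for $x$ in the absorption step --- since these combine two ``eventual'' inequalities (equivalently, invoke the fact that $\le$ is preserved under multiplication by a nonnegative fluxion) rather than manipulating a single one.
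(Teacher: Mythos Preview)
Your proof is correct and follows essentially the same route as the paper's: direct triangle-inequality estimates for $\F$ and $\I$, together with the observation that $\R\cap\I=\{0\}$ and that every convergent fluxion decomposes as a real plus an infinitesimal. Your treatment is in fact slightly more thorough --- you explicitly verify closure of $\M$ under products via $xy=ab+(ak+bh+hk)$, whereas the paper leaves this implicit (effectively deferring it to the Algebra of Convergence proposition) --- and your use of thresholds $a/2$ and $a/b$ is just a cosmetic variant of the paper's ``$2\ep$'' and ``$a\ep$'' formulations.
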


\begin{proof}
Let $x, y \in \F$. Then there are positive reals $a,b$ such that
$|x|<a$ and $|y|<b$. Hence $|x-y| \le |x| + |y| < a+b$, so $x-y \in
\F$. Since $|xy| = |x| |y| < ab$, it also follows that $xy \in \F$. So
$\F$ is a subring of $\dot{\R}$.

Now let $h, i \in \I$ and $x \in \F$. So $|x|<a$ for some positive
real $a$, while $|h| < \ep$ and $|i|<\ep$ for every positive real
$\ep$. Then $$|h-i| \le |h|+|i| < \ep+\ep = 2\ep,$$ for every positive
real $\ep$, so $h-i \in \I$. Moreover, $|xh| = |x| |h| < a \ep$, for
every positive real $\ep$, so $xh \in \I$.

Finally, $\R \cap \I = \{0\}$ since $0$ is the only real
infinitesimal. Any convergent $x$ is finite, and can be written in the
form $x = a+h$ for some real $a$, and some infinitesimal $h$.
\end{proof}

The proposition says, in particular, that \emph{sums, differences, and
  products of finite fluxions are finite}, and similarly \emph{sums,
  differences, and products of infinitesimals are
  infinitesimal}. Furthermore, \emph{a product of a finite fluxion and
  an infinitesimal is infinitesimal}. Finally, \emph{every convergent
  fluxion is uniquely expressible as the sum of a real number and an
  infinitesimal.}

Notice that it follows immediately from the proposition that limits
are unique: if $x \to a$ and $x \to a'$ for $a,a' \in \R$ then $a=a'$.
For $x-a=h$ and $x-a'=h'$ where $h,h'$ are infinitesimal, so the
difference $a-a'=h'-h$ is infinitesimal, but since both $a,a'$ are
real this forces $a=a'$, as desired.

It does not seem possible to obtain a meaningful geometric model of
the entire ring $\dot{\R}$ of fluxions. However, if we restrict our
attention to the subring $\M$ of convergent fluxions, then we could
define the \emph{monad} of a real number $a$ to be $\text{monad}(a) :=
\{ x \in \dot{\R} \colon x \to a \}$, and visualize this set as an
infinitesimal ``cloud'' surrounding $a$, intersecting the real line in
just the point $a$. While $\M$ itself is not linearly ordered, the
monads are: $\M$ is the disjoint union of its monads, and could be
roughly visualized as a ``cloudy'' line in which each point on the
usual real line has been expanded to its corresponding monad.

Next we consider infinite limits. 

\begin{defn*}
Let $x$ be a fluxion. Write $x \to \infty$ ($x$ \emph{approaches
  infinity}) if $x > a$ for every real $a$. Similarly, write $x \to
-\infty$ ($x$ \emph{approaches negative infinity}) if $x < a$ for
every real $a$.
\end{defn*}

For example, $[n] \to \infty$ and $[-n] \to -\infty$. We should
reiterate that if $x \in \dot{\R}$ approaches $\pm \infty$ then $x$
diverges. Moreover, there are many examples (\emph{e.g.}, $[(-1)^n n]$,
$[(-1)^n]$) of fluxions which diverge, yet neither approach infinity
nor approach minus infinity.

The following is a useful characterization of fluxions which diverge
to $\pm \infty$. Let us introduce the notations $\I^+ := \{x\in
\I\colon x>0\}$, $\I^- := \{x\in \I\colon x<0\}$. These are the sets
of eventually positive or eventually negative infinitesimals.

\begin{prop}
  Let $x$ be a fluxion. Then $x \to \infty \iff 1/x \in \I^+$ and $x
  \to -\infty \iff 1/x \in \I^-$.
\end{prop}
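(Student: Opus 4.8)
The plan is to prove the first equivalence directly and then deduce the second one for free from the symmetry $x \to -\infty \iff -x \to \infty$ together with $1/(-x) = -(1/x)$, which interchanges $\I^+$ and $\I^-$. So all the content lies in establishing $x \to \infty \iff 1/x \in \I^+$.

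For the forward direction, suppose $x \to \infty$. Then in particular $x > 0$, so $x$ avoids $0$ and $1/x$ is defined by the earlier proposition characterizing invertible fluxions. Writing $x = [x_n]$, the relation $x > 0$ says $x_n > 0$ for all but finitely many $n$, whence $1/x_n > 0$ for those same $n$ and thus $1/x > 0$. To see that $1/x$ is infinitesimal, fix a real $\ep > 0$ and apply $x \to \infty$ with $a = 1/\ep$ to get $x > 1/\ep$, i.e. $x_n > 1/\ep$ for all but finitely many $n$; reciprocating this inequality among positive reals yields $0 < 1/x_n < \ep$ for all but finitely many $n$, so $|1/x| < \ep$. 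As $\ep$ was an arbitrary positive real, $1/x \in \I^+$.

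For the converse, suppose $1/x \in \I^+$. Membership in $\I^+$ already presupposes that $1/x$ exists, hence (again by the earlier proposition) that $x$ avoids $0$, so $x_n \ne 0$ for all but finitely many $n$; combining this with $1/x > 0$ (which gives $1/x_n > 0$ for all but finitely many $n$) and the identity $x_n \cdot (1/x_n) = 1$, we obtain $x_n > 0$ for all but finitely many $n$, i.e. $x > 0$. Now fix any real $a$. If $a \le 0$ then $x_n > 0 \ge a$ for all but finitely many $n$, so $x > a$. If $a > 0$, then since $1/x$ is infinitesimal we have $|1/x| < 1/a$, and because $1/x > 0$ this reads $0 < 1/x_n < 1/a$ for all but finitely many $n$; reciprocating gives $x_n > a$ for all but finitely many $n$, so $x > a$. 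Since $a$ was arbitrary, $x \to \infty$.

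The only points that demand a little care — and which I would flag as a matter of bookkeeping rather than genuine difficulty — are the two reciprocation steps, which are valid precisely because the relevant $x_n$ are positive reals, and the routine translation back and forth between a fluxion inequality and its term-level meaning of holding for all but finitely many $n$; one must also be sure $1/x$ is well-defined before manipulating it, which in the converse direction is automatic once we read $\I^+$ as consisting of invertible fluxions.
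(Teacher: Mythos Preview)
Your proof is correct and follows essentially the same route as the paper: establish $x>0$ to get invertibility and positivity of $1/x$, then reciprocate the inequalities $x>1/\ep$ (forward) and $0<1/x<1/a$ (converse), handling the negative case by symmetry where the paper simply says ``similar.'' Your version is more explicit at the term-by-term level, but the logical structure is identical.
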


\begin{proof}
Suppose $x \to \infty$. Then $x > a$ for any real number $a$, so
$x>0$. Thus $x$ is invertible, $1/x > 0$, and since $1/x < 1/a$ for
any real $a>0$, it follows that $1/x$ is smaller than any positive
real number, hence is infinitesimal.  Conversely, if $1/x>0$ is
infinitesimal then $0 < 1/x < a$ for every positive real number $a$,
so $x > 1/a$ for every positive real $a$, and it follows that $x > b$
for every $b \in \R$, so $x \to \infty$.

The negative case is similar.
\end{proof}

In particular, if $x \to \pm \infty$ then $x$ is invertible.  The
following result gives conditions under which the reciprocal of a
fluxion exists and is finite. This is often useful in calculations.

\begin{prop}\label{thm:invert}
Let $x$ be a fluxion.

\item{\upshape(a)} Suppose that $x$ is bounded away from $0$, meaning
  that there is some real number $b$ such that $0<b<|x|$. Then $x$
  avoids $0$, so $x^{-1}=1/x$ exists in $\dot{\R}$, and moreover $1/x$
  is finite.

\item{\upshape(b)} If $x \to a$ for $a \ne 0$ real, then
  $|a|-\frac{1}{n} < |x| < |a|+\frac{1}{n}$ for any natural number
  $n$. Hence $1/x$ exists and is finite.
\end{prop}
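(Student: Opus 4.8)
The plan is to deduce part (a) directly from the definitions together with the earlier proposition characterizing the units of $\dot{\R}$ as exactly the fluxions that avoid $0$, and then to obtain part (b) as a short corollary of part (a). For part (a), I would first unpack the hypothesis $0<b<|x|$ into its componentwise meaning: $b<|x_n|$ for all but finitely many $n$. In particular $x_n\neq 0$ for all but finitely many $n$, so $x\avoids 0$; hence $x$ is invertible, and choosing a representative $(x_n)$ of $x$ with $x_n\neq 0$ for every $n$ (possible since $x$ avoids $0$) we have $1/x=[1/x_n]$. From $|x_n|>b>0$ I then get $|1/x_n|=1/|x_n|<1/b$ for all but finitely many $n$, so $|1/x|<1/b$; since $1/b$ is a positive real number, this exhibits $1/x$ as finite.

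For part (b), I would start from the fact that $x\to a$ means $h:=x-a$ is infinitesimal, so $|h|<1/n$ for every natural number $n$ (each $1/n$ being a positive real). Two applications of the triangle inequality give $|x|=|a+h|\le|a|+|h|$ and $|a|=|x-h|\le|x|+|h|$, i.e.\ $|a|-|h|\le|x|$; combining these with $|h|<1/n$ and with the mixed transitivity of $\le$ and $<$ (immediate from the componentwise definitions) yields
\[
|a|-\tfrac1n \;<\; |a|-|h| \;\le\; |x| \;\le\; |a|+|h| \;<\; |a|+\tfrac1n,
\]
which is the asserted double inequality. To finish, since $a\neq 0$ I would fix a natural number $n$ with $n>1/|a|$, so that $b:=|a|-\tfrac1n$ is a positive real with $0<b<|x|$; part (a) then shows that $1/x$ exists and is finite.

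I do not expect any genuine obstacle here: once the orderings, the relation $\avoids$, and $|\cdot|$ are read off componentwise, everything follows from the triangle inequality and the units proposition. The one place that calls for a little care is the bookkeeping of strictness --- one must note that $|h|<1/n$ is strict because $1/n$ is an honest positive real, and that composing $\le$ with $<$ produces $<$, so that the two outer inequalities in the display are strict as needed --- together with the minor point, in (a), of passing to a zero-free representative before writing $1/x=[1/x_n]$.
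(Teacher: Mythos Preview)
Your proposal is correct and follows essentially the same approach as the paper: unpack $0<b<|x|$ componentwise to get a zero-free representative and the bound $|1/x|<1/b$, then for (b) write $h=x-a$ infinitesimal, apply the triangle inequality in both directions, and invoke the archimedean property to reduce to (a). The only cosmetic difference is that the paper phrases the lower bound via the reverse triangle inequality $|x|\ge\big||a|-|h|\big|$ rather than via $|a|\le|x|+|h|$, which amounts to the same thing.
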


\begin{proof}
(a) Write $x = [x_n]$. There must be some $N$ for which $b< |x_n|$ for
  all $n \ge N$, so in particular (by replacing the first $N-1$ terms
  by something nonzero if necessary) we may assume that $x_n \ne 0$
  for all $n$, and thus $1/x = [1/x_n]$ makes sense. Clearly $b<|x|$
  implies $|1/x| < 1/b$, so $1/x$ is finite.

(b) Put $h=x-a$. Then $h$ is infinitesimal, so $|h|<\frac{1}{n}$ for
  any $n \in \N$. Then $|x| = |a+h| \le |a|+|h| < |a|+\frac{1}{n}$,
  for any $n$. Similarly, $|x|=|a-(-h)|\ge \big| |a|-|h| \big| > |a| -
  \frac{1}{n}$, for any $n$. In particular, by taking $n$ so large
  that $\frac{1}{n} < |a|$ (such an $n$ exists by the archimedean
  property of $\R$) we see that $|x| > |a|-\frac{1}{n}$, i.e, $x$ is
  bounded away from 0.
\end{proof}

The following summarizes how convergence interacts with algebraic
operations.

\begin{prop}[Algebra of Convergence] \label{AlgebraOfConvergenceThm}
  Suppose that $x \to a$ and $y \to b$ where $a,b$ are real. Let $c
  \in \R$. Then $x+y \to a+b$, $x-y \to a-b$, $cy \to cb$, $xy \to
  ab$, and $x/y \to a/b$ (provided $b \ne 0$).
\end{prop}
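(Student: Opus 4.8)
The plan is to reduce everything to the Algebra of Convergence already implicit in the earlier Proposition on finite fluxions and infinitesimals, where we proved that $\M = \R \oplus \I$ and that $\I$ is an ideal in $\F$. First I would write $x = a + h$ and $y = b + k$ with $h, k \in \I$, which is legitimate precisely because $x$ and $y$ converge. Then each claim becomes an algebraic identity followed by the observation that the leftover term is infinitesimal: for addition, $x + y = (a+b) + (h+k)$ and $h+k \in \I$ since $\I$ is closed under sums; for subtraction, $x - y = (a-b) + (h-k)$ likewise; for the scalar multiple, $cy = cb + ck$ and $ck \in \I$ since a constant fluxion is finite and $\I$ is an ideal in $\F$.

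For the product, I would expand $xy = ab + (ak + bh + hk)$. Each of $ak$, $bh$ is infinitesimal because $a, b \in \R \subseteq \F$ and $\I$ is an ideal in $\F$; and $hk \in \I$ because it is even a product of two infinitesimals (or, more crudely, $h \in \I \subseteq \F$ so $hk \in \I$). Hence the bracketed term is a sum of three infinitesimals, so infinitesimal, giving $xy \to ab$.

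The quotient is the only step needing genuine care, and it is the main obstacle: I must first know that $1/y$ exists in $\dot\R$ and is finite. This is exactly what Proposition~\ref{thm:invert}(b) supplies, using $b \ne 0$: $y$ is bounded away from $0$, hence invertible, and $1/y$ is a finite fluxion. Given that, I would show $1/y \to 1/b$ by writing $\frac{1}{y} - \frac{1}{b} = \frac{b - y}{by} = \frac{-k}{by}$; the numerator $-k$ is infinitesimal, and $\frac{1}{by} = \frac{1}{b}\cdot\frac{1}{y}$ is a product of a real and a finite fluxion, hence finite, so the whole expression is (finite) $\times$ (infinitesimal), which is infinitesimal. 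Thus $1/y \to 1/b$, and then $x/y = x \cdot (1/y) \to a \cdot (1/b) = a/b$ by the product case already established. The only subtlety to flag is that $1/y$ may differ from the naive $[1/y_n]$ on finitely many terms where $y_n = 0$, but this is harmless since fluxions are defined up to eventual equality.
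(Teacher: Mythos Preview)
Your proof is correct and follows essentially the same approach as the paper: write $x=a+h$, $y=b+k$ with $h,k$ infinitesimal, then verify each difference is infinitesimal using the ideal property of $\I$ in $\F$ and Proposition~\ref{thm:invert} for invertibility of $y$. The only cosmetic differences are that the paper groups the product as $xy-ab = hy+ak$ (two terms, using that $y$ is finite) rather than your full expansion $ak+bh+hk$, and handles the quotient in one step via $\frac{x}{y}-\frac{a}{b}=\frac{bh-ak}{by}$ rather than your two-step route through $1/y\to 1/b$ followed by the product rule.
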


\begin{proof}
Let $x-a=h$, $y-b=k$ where $h$, $k$ are infinitesimals. Then
$(x+y)-(a+b) = h+k$, $(x-y)-(a-b) = h-k$, $cy-cb = ck$, $xy-ab =
xy-ay+ay-ab = (x-a)y+a(y-b) = hy+ak$, and $$\frac{x}{y} - \frac{a}{b} =
\frac{bx-ay}{by} = \frac{b(x-a)+a(b-y)}{by} = \frac{bh-ak}{by}$$ are
all infinitesimal.  (Proposition \ref{thm:invert} shows that $y$ is
invertible in $\dot{\R}$ if $b\ne 0$.)
\end{proof}

Next we consider how convergence interacts with inequalities.

\begin{prop}[Convergence and Inequalities]\label{le}
  Let $a,b$ be real numbers, and $x,y,z$ fluxions.

  \item{\upshape(a)} If $a+h \ge 0$ for some infinitesimal $h$, then $a
  \ge 0$.

  \item{\upshape(b)} If $x \le y$, $x \to a$ and $y \to b$ then $a \le
  b$.

  \item{\upshape(c)} If $x \le y \le z$, $x \to a$ and $z \to a$ then
  $y \to a$.
\end{prop}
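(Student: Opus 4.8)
The plan is to prove the three parts in order, since (b) will use (a), and (c) will use (b) together with the Algebra of Convergence (Proposition~\ref{AlgebraOfConvergenceThm}). These are exactly the fluxional analogues of the classical limit order lemmas, and the arguments should be short once the componentwise definitions are unwound.

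For part~(a), I would argue by contradiction: suppose $a < 0$, i.e.\ $a$ is a negative real. Then $-a > 0$ is a positive real number, so by the definition of infinitesimal, $|h| < -a$, which gives $h > a$, hence $a + h > 2a$; more simply, $h > a$ means $a + h > 2a$, but I only need $a+h < 0$. Actually the cleanest route: if $a<0$ then since $h$ is infinitesimal, $|h| < |a| = -a$, so $h < -a$, i.e.\ $a+h < 0$, contradicting $a + h \ge 0$. (Here I am using Proposition~1(d) and the order properties listed in the excerpt.) Hence $a \ge 0$.

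For part~(b), from $x \le y$ we get $y - x \ge 0$, i.e.\ $0 \le y - x$. Write $x = a + h$ and $y = b + k$ with $h, k$ infinitesimal. Then $y - x = (b - a) + (k - h)$, and $k - h$ is infinitesimal by the Algebra of Convergence (or directly by Proposition~2, that $\I$ is an ideal). So $(b-a) + (k-h) \ge 0$ with $b - a$ real and $k - h$ infinitesimal, and part~(a) yields $b - a \ge 0$, that is, $a \le b$.

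For part~(c), the hypothesis $x \le y \le z$ gives $0 \le y - x$ and $0 \le z - y$. Since $x \to a$ and $z \to a$, write $x = a + h$, $z = a + k$ with $h,k$ infinitesimal. From $x \le y \le z$ we get $a + h \le y \le a + k$, hence $h \le y - a \le k$. Now $y - a$ is squeezed between two infinitesimals, so I claim $y - a$ is itself infinitesimal: for any positive real $\ep$, we have $|h| < \ep$ and $|k| < \ep$, so $-\ep < h \le y - a \le k < \ep$, giving $|y - a| < \ep$ by Proposition~1(d). Thus $y - a$ is infinitesimal, i.e.\ $y \to a$. The only mild subtlety — and the step I would flag as the one needing a line of care rather than a genuine obstacle — is the squeeze bound: one must note that $h \le y-a$ and $h > -\ep$ together give $y - a > -\ep$, and symmetrically $y-a < \ep$, using transitivity of the (strict and non-strict) fluxional order relations as recorded in the excerpt. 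Everything else is a direct translation of the standard $\ep$-argument into the fluxional language.
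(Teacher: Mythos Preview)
Your proof is correct and follows essentially the same route as the paper's. The only cosmetic difference is in part~(c): the paper justifies the squeeze by dropping to components and noting $|y_n-a|\le\max(|x_n-a|,|z_n-a|)$ for sufficiently large $n$, whereas you stay at the fluxion level and invoke the mixed transitivity $-\ep<h\le y-a\le k<\ep$; both arguments are equally valid and equally short.
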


\begin{proof}
(a) Assume $a < 0$. Then $-a > 0$ and $h \ge -a$. This contradicts the
  assumption that $h$ is infinitesimal.

(b) Write $x = a+h$, $y = b+k$ where $h,k$ are infinitesimals. Then
  $x-y = (a-b) + (h-k)$, so $(a-b) + (h-k) \ge 0$. By part (a), it
  follows that $a-b \ge 0$.

(c) Since $x-a \le y-a \le z-a$ it follows that $y-a$ is
  infinitesimal. (To be precise, $x-a \le y-a \le z-a$ implies that
  $|y_n-a| \le \max(|x_n-a|, |z_n-a|)$ for all sufficiently large $n$,
  which shows that $y-a$ is infinitesimal.) Thus $y \to a$. 
\end{proof}

The reader can easily verify that part (b) of the preceding result
admits the following generalization: if $x \le y$, $x \to a$ and $y
\to b$ for $a,b \in \R \cup \{\pm \infty\}$ then $a \le b$. For this
interpretation, one has to stipulate that $-\infty < a < \infty$ for
all $a \in \R$. It should also be noted that part (b) fails for strict
inequalities: it is possible to find examples where $x < y$, $x\to a$,
and $y\to b$ yet $a\not < b$.

The following result is a fundamental property of convergence. We say
that $x \in \dot{\R}$ is \emph{monotonic} if $x = [x_n]$ is eventually
either monotonically increasing ($x_n \le x_{n+1}$ for all
sufficiently large $n$) or monotonically decreasing ($x_n \ge x_{n+1}$
for all sufficiently large $n$).

\begin{thm*}[Monotone Convergence Theorem]
  A monotonic fluxion is finite if and only if it converges.
\end{thm*}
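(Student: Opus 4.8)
The plan is to separate the two implications. One of them comes for free: ``$x$ converges $\Rightarrow$ $x$ is finite'' holds for \emph{every} fluxion, not just monotonic ones, and was already recorded just after the definition of infinitesimal (if $x-a=h$ with $h$ infinitesimal, then $|x|\le|a|+|h|<|a|+1$). So I would only need to treat the reverse implication, ``$x$ monotonic and finite $\Rightarrow$ $x$ converges'', and here the least upper bound property of $\R$ is the essential tool.

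For that direction I would first normalize. Replacing $x$ by $-x$ if necessary---legitimate, since $-x$ is monotonic, is finite exactly when $x$ is, and converges exactly when $x$ does (apply Proposition~\ref{AlgebraOfConvergenceThm} with $c=-1$ in each direction)---I may assume $x=[x_n]$ is eventually increasing, say $x_n\le x_{n+1}$ for all $n\ge M$. Overwriting the terms $x_1,\dots,x_{M-1}$ by $x_M$ changes neither the fluxion nor its finiteness and yields a representative that is increasing for \emph{all} $n$, so I may assume that. Finiteness provides a real $a>0$ with $|x_n|<a$ for all large $n$; since the sequence is now increasing, $x_n<a$ for \emph{every} $n$, so $S:=\{x_n:n\in\N\}$ is a nonempty subset of $\R$ that is bounded above.

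Then I would invoke completeness: set $L:=\sup S\in\R$ and show $x\to L$, i.e.\ that $x-L$ is infinitesimal. Given an arbitrary positive real $\ep$, the number $L-\ep$ is not an upper bound of $S$, so $x_N>L-\ep$ for some index $N$; monotonicity propagates this to $x_n>L-\ep$ for all $n\ge N$, while $x_n\le L$ for all $n$. Hence $|x_n-L|=L-x_n<\ep$ for all $n\ge N$, so in particular for all but finitely many $n$, which means $|x-L|<\ep$ in $\dot{\R}$. Since $\ep$ was an arbitrary positive real, $x-L$ is infinitesimal, and that is precisely the statement $x\to L$.

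As for the main obstacle: there is no deep one---the whole theorem rests on the least upper bound property of $\R$, and once that is in hand the rest is the standard ``eventually'' bookkeeping. The only points that need a moment's attention are the reduction to a genuinely (not merely eventually) increasing and bounded representative, and the observation that checking $|x_n-L|<\ep$ only for large $n$ already yields $|x-L|<\ep$ in the fluxion order, since that order tolerates finitely many exceptions.
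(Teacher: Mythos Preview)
Your proof is correct and follows essentially the same route as the paper: handle the easy direction by the earlier observation that convergent fluxions are finite, reduce to the increasing case, take the supremum of the (tail of the) sequence via the least upper bound property of $\R$, and verify that the difference is infinitesimal using the standard supremum argument. The paper is terser---it works directly with the tail set $\{x_n:n\ge N\}$ rather than first normalizing to a globally increasing representative---but the substance is identical.
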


\begin{proof}
We argue only the increasing case, as the decreasing case is similar.
Assume $x$ is monotonically increasing, \emph{i.e.}, $x_n \le x_{n+1}$ for
all $n \ge N$. If $x$ converges then $x$ is finite. Conversely,
suppose that $x$ is finite. Let $E = \{ x_n \colon n \ge N\}$. The set
$E$ is a bounded set of real numbers, so its supremum $a = \sup E$
exists in $\R$. Then it is easy to see from the definition of supremum
that $x-a$ is infinitesimal, and hence $x \to a$.
\end{proof}

It should be clear that if a fluxion $x$ is monotonically increasing
yet not finite, then $x \to \infty$, and similarly, if $x$ is
monotonically decreasing yet not finite, then $x \to -\infty$.

\begin{defn*}
If $(x_{n_k})_{k \in \N}$ is a subsequence of a sequence $(x_n)_{n\in
  \N}$ defining a fluxion $x = [x_n]$, put $y = [x_{n_k}]$ and say
that the fluxion $y$ is a \emph{subfluxion} of $x$ (written as $y
\subset x$).
\end{defn*}

The Bolzano--Weierstrass theorem is another fundamental result in
analysis. The proof given below is taken from a Wikipedia article.

\begin{thm*}[Bolzano--Weierstrass Theorem]
  Any finite fluxion must have a convergent subfluxion.
\end{thm*}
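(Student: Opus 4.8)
The plan is to reduce the statement to the Monotone Convergence Theorem, which has already been proved, by producing a \emph{monotonic} subfluxion. So the real content is the classical fact that every real sequence has a monotonic subsequence; once that is in hand, the fluxional apparatus finishes the job almost immediately.

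First I would fix a representative sequence $(x_n)$ for the given finite fluxion $x$, chosen so that there is a real $a>0$ with $|x_n| < a$ for all sufficiently large $n$. (One may even replace finitely many terms to arrange $|x_n| < a$ for every $n$, though this is not essential.) Next comes the main step: show that $(x_n)$ admits a subsequence $(x_{n_k})_{k\in\N}$ that is monotonic, i.e.\ satisfies either $x_{n_k} \le x_{n_{k+1}}$ for all $k$ or $x_{n_k} \ge x_{n_{k+1}}$ for all $k$. I would use the standard ``peak'' argument: call an index $m$ a peak of $(x_n)$ if $x_m \ge x_n$ for every $n > m$. If there are infinitely many peaks $m_1 < m_2 < \cdots$, then $x_{m_k} \ge x_{m_{k+1}}$ for all $k$, so $(x_{m_k})$ is weakly decreasing. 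If instead there are only finitely many peaks, pick $N$ larger than all of them; since no index $n \ge N$ is a peak, one recursively chooses $n_1 = N$ and, given $n_k$, an index $n_{k+1} > n_k$ with $x_{n_{k+1}} > x_{n_k}$, obtaining a strictly increasing subsequence.

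Having produced the monotone subsequence, I would set $y = [x_{n_k}]$, so that $y \subset x$ is a monotonic fluxion. It is also finite: since $n_k \to \infty$ as $k \to \infty$, the eventual bound $|x_n| < a$ passes to $|x_{n_k}| < a$ for all large $k$, hence $|y| < a$. Now the Monotone Convergence Theorem applies to $y$, yielding a real number $b$ with $y \to b$. Thus $x$ has a convergent subfluxion, which is exactly what was to be shown.

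I expect the only nontrivial ingredient to be the existence of the monotone subsequence; the remainder is bookkeeping — transferring an ``eventual'' bound from a sequence to a subsequence and invoking a theorem already in hand. A minor point to keep straight is that a subfluxion is attached to a chosen representative sequence rather than to the equivalence class itself, so one should pin down the representative $(x_n)$ at the outset and work with it throughout.
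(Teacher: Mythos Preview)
Your proposal is correct and follows essentially the same route as the paper: produce a monotone subsequence via the ``peak'' argument and then invoke the Monotone Convergence Theorem. The only cosmetic difference is your choice of weak inequality in the definition of a peak (the paper uses strict), and you add a bit of extra care about fixing a representative and transferring the bound to the subsequence, which the paper leaves implicit.
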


\begin{proof}
Let $(x_n)$ be a sequence such that the corresponding fluxion
$x=[x_n]$ is finite. We claim that there must be a monotonic
subsequence of $(x_n)$; the corresponding fluxion must converge by the
monotone convergence theorem.  

To prove the claim, let us call a positive integer $n$ a \emph{peak}
of the sequence $(x_n)$ if $x_n > x_m$ for all $m > n$;  \emph{i.e.}, if 
$x_n$ is greater than every subsequent term in the sequence. Suppose
first that the sequence has infinitely many peaks, $n_1 < n_2 < n_3 <
\cdots < n_k < n_{k+1} < \cdots$. Then the subsequence $(x_{n_k})$ 
corresponding to peaks is monotonically decreasing, and we are
done. So suppose now that there are only finitely many peaks, let $N$
be the last peak and put $n_1 = N + 1$. Then $n_1$ is not a peak,
since $n_1 > N$, which implies the existence of an $n_2 > n_1$ with
$x_{n_2} \geq x_{n_1}$.  Again, any $n_2 > N$ is not a peak, hence
there is some $n_3 > n_2$ with $x_{n_3} \geq x_{n_2}$.  Repeating this
process leads to an infinite monotonically increasing subsequence
$x_{n_1} \leq x_{n_2} \leq x_{n_3} \leq \cdots$. Finally, if there are
no peaks at all, then for every element of the sequence, there must be
a subsequent larger element, which, in turn has a subsequent larger
element and so on, and these constitute a monotonic subsequence. This
proves the claim, and thus the result.
\end{proof}

We now discuss limits inferior and superior. For this we need to work
within the extended real-number system $\R \cup \{\pm \infty\}$. 

\begin{defn*}
  Suppose $x$ is a fluxion.  Let $E \subset \R \cup\{\pm \infty\}$ be
  the collection of all extended real numbers $a \in \R \cup\{\pm
  \infty\}$ for which there is some subfluxion $y \subset x$ such that
  $y\to a$. Define
  \[
     \limsup x = \sup E, \qquad  \liminf x = \inf E.
  \]
  The extended real numbers $\limsup x$ and $\liminf x$ are
  respectively called the \emph{limit superior} and \emph{limit
    inferior} of the fluxion $x$.
\end{defn*}

It should be clear to the reader that if $x \to a$ then also $y \to
a$, for any $y \subset x$. This is true for any $a \in \R \cup \{\pm
\infty\}$.  This leads to the following characterization of
convergence (and divergence to $\pm \infty$) in terms of the
limit superior and limit inferior.

\begin{prop}
 Let $x$ be a fluxion. Then $x \to a \in \R \cup \{\pm \infty\}$ if
 and only if $\limsup x = a = \liminf x$.
\end{prop}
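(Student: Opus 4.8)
The plan is to prove both implications, the forward one being immediate and the reverse one doing all the work. For the forward direction, assume $x \to a$ with $a \in \R \cup \{\pm\infty\}$. As noted just above the statement, every subfluxion $y \subset x$ then also satisfies $y \to a$; since $x$ is a subfluxion of itself, $a \in E$, and since every member of $E$ is by definition the limit of some subfluxion of $x$, uniqueness of limits forces every member of $E$ to equal $a$. Hence $E = \{a\}$, so $\limsup x = \sup E = a = \inf E = \liminf x$.

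For the reverse direction I would first record the extraction principle that underlies everything: \emph{every fluxion has a subfluxion that converges in $\R\cup\{\pm\infty\}$}, equivalently $E \neq \emptyset$. Indeed, if the underlying sequence is bounded then the fluxion is finite and Bolzano--Weierstrass supplies a convergent subfluxion, while if it is unbounded above (resp.\ below) one recursively chooses indices $n_1 < n_2 < \cdots$ with $x_{n_k} > k$ (resp.\ $x_{n_k} < -k$), giving a subfluxion tending to $+\infty$ (resp.\ $-\infty$). Granting this, suppose $\limsup x = \liminf x = a$. Since $E \neq \emptyset$ and $\inf E = \sup E = a$, every $e\in E$ satisfies $a \le e \le a$, so $E = \{a\}$; it remains to deduce $x \to a$. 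I would argue by contradiction: assuming $x \not\to a$, I would manufacture a subfluxion with limit $\ne a$. If $a$ is real, then $|x_n - a| \ge \ep$ for infinitely many $n$ for some real $\ep > 0$; extracting that subsequence and applying the extraction principle to it yields a subfluxion $z \subset x$ with $z \to b \in \R\cup\{\pm\infty\}$. If $b = \pm\infty$ then $b \ne a$ outright; if $b$ is real, then $z - a \to b - a$ (Proposition \ref{AlgebraOfConvergenceThm}), the componentwise reverse triangle inequality gives $|z-a| \to |b-a|$, and since $|z - a| \ge \ep$ termwise Proposition \ref{le}(b) forces $|b - a| \ge \ep$, so again $b \ne a$. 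Either way $b \in E \setminus \{a\}$, a contradiction. If $a = +\infty$ and $x \not\to +\infty$, then $x_n \le c$ for infinitely many $n$ for some real $c$; that subsequence is bounded above, hence has a subfluxion with limit $b \le c < +\infty$, so $b \in E \setminus \{+\infty\}$, a contradiction. The case $a = -\infty$ is symmetric.

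The step I expect to be the main obstacle is exactly the one the extraction principle is designed to handle: the ``bad'' subsequences produced in the reverse direction need not define \emph{finite} fluxions, so Bolzano--Weierstrass does not apply to them directly, and one must instead guarantee a subsequential limit in the \emph{extended} reals. Once that is in place, the remaining ingredients --- rewriting ``$x\not\to a$'' as the existence of a persistently bad subsequence, and passing termwise inequalities through to the limit via Proposition \ref{le} and the stated properties of absolute value --- are routine.
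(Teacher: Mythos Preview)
Your argument is correct and shares the paper's overall skeleton: the forward direction is the same (if $x\to a$ then every subfluxion does, so $E=\{a\}$), and for the converse you both pass through $E=\{a\}$ before concluding $x\to a$. The difference is one of completeness. The paper dispatches the reverse direction in a single sentence --- ``the set $E$ \dots\ must be a singleton $\{a\}$, and so every subfluxion of $x$ must approach $a$; in particular, $x\to a$'' --- without justifying the inference from $E=\{a\}$ to ``every subfluxion approaches $a$.'' That inference is not automatic: $E$ records only the limits of those subfluxions that \emph{do} converge in $\R\cup\{\pm\infty\}$, and a priori some subfluxion (indeed $x$ itself) might diverge. Your extraction principle (every fluxion has a subfluxion with a limit in the extended reals, via Bolzano--Weierstrass in the bounded case and an explicit unbounded subsequence otherwise) is precisely the missing ingredient, and your contradiction argument --- pull out a persistently bad subsequence, extract a convergent sub-subfluxion, and use Proposition~\ref{le}(b) to show its limit differs from $a$ --- is exactly how one closes the gap. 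So your version is not a different route so much as a fully worked-out version of what the paper asserts.
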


\begin{proof} 
  Suppose $x \to a$. Then $y \to a$ for any $y \subset x$, and thus
  $E= \{a \in \R \cup\{\pm \infty\} \colon y \to a, y \subset x \}$
  is a singleton $\{a\}$, whence $\sup E = \inf E = a$. On the other
  hand, if $\limsup x = a = \liminf x$ then the set $E$ defined above
  must be a singleton $\{a\}$, and so every subfluxion of $x$ must
  approach $a$. In particular, $x \to a$.
\end{proof}

Here is an elementary application of the last result, which
illustrates the utility of these concepts. 

\begin{prop}\label{thm:nth-root}
  Let $x \ge 0$ be a fluxion and suppose that $x \to a$, where $a$ is
  real. Then $\sqrt[n]{x} \to \sqrt[n]{a}$, for any $n \in \N$.
\end{prop}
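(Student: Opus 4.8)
The plan is to route the argument through the characterization just proved, namely that $z \to c$ in $\R\cup\{\pm\infty\}$ if and only if $\limsup z = c = \liminf z$. Fix the root index $m \in \N$ (I avoid the letter $n$, which is already in use for the sequence index) and put $z := \sqrt[m]{x} = [\,\sqrt[m]{x_n}\,]$; this is meaningful since $x \ge 0$ forces $x_n \ge 0$ for all large $n$, and the finitely many exceptional terms do not affect the fluxion. Note also that $\sqrt[m]{a}$ makes sense: writing $x = a + h$ with $h$ infinitesimal, the hypothesis $x \ge 0$ gives $a + h \ge 0$, hence $a \ge 0$ by Proposition~\ref{le}(a). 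It will thus be enough to show that the set $E$ of all $c \in \R\cup\{\pm\infty\}$ for which some subfluxion $z' \subset z$ satisfies $z' \to c$ is exactly $\{\sqrt[m]{a}\}$.

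First I would observe that $z$ is finite: since $x$ converges it is finite, so $|x| < M$ eventually for some real $M > 0$, and then $0 \le \sqrt[m]{x_n} < \sqrt[m]{M}$ eventually, that is, $|z| < \sqrt[m]{M}$. Consequently no subfluxion of the bounded fluxion $z$ can approach $\pm\infty$, so $E \subseteq \R$; and by the Bolzano--Weierstrass theorem $z$ has a convergent subfluxion, so $E \ne \emptyset$.

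Next, take any $c \in E$, with $z' \subset z$ and $z' \to c$ (a real number). Every subfluxion of $z = \sqrt[m]{x}$ has the form $\sqrt[m]{y}$ for the corresponding subfluxion $y \subset x$, and $(z')^m = y$; moreover $y \to a$, because $x \to a$ and every subfluxion of a convergent fluxion shares its limit. Repeated use of the Algebra of Convergence (Proposition~\ref{AlgebraOfConvergenceThm}) gives $(z')^m \to c^m$, so $c^m = a$ by uniqueness of limits. Since $z' \ge 0$ we get $c \ge 0$ (Proposition~\ref{le}(a) once more), and $a \ge 0$, so by uniqueness of the nonnegative real $m$-th root, $c = \sqrt[m]{a}$. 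Hence $E = \{\sqrt[m]{a}\}$, so $\limsup z = \liminf z = \sqrt[m]{a}$, and the characterization yields $z \to \sqrt[m]{a}$, as desired.

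The only step I expect to require genuine care is guaranteeing $E \ne \emptyset$: the $\limsup/\liminf$ test is vacuous unless $z$ actually admits a subfluxion with a limit, so one must first establish that $z$ is bounded and invoke Bolzano--Weierstrass before squeezing the admissible limits down to the single value $\sqrt[m]{a}$. A pleasant feature of this route is that it treats the case $a = 0$ on exactly the same footing as $a > 0$; a more hands-on estimate based on the factorization $u^m - v^m = (u-v)(u^{m-1} + \cdots + v^{m-1})$ would need a separate argument at $a = 0$, where the factor dividing $|x_n - a|$ degenerates.
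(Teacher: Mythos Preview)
Your proof is correct and follows essentially the same route as the paper's: both arguments hinge on the $\limsup/\liminf$ characterization together with the observation that the $m$-th power of any subfluxional limit of $\sqrt[m]{x}$ must equal $a$. The paper packages this as the contrapositive claim ``$y\ge 0$ diverges $\Rightarrow y^m$ diverges'' and then applies it to $y=\sqrt[m]{x}$, whereas you argue directly that the set $E$ of subfluxional limits is the singleton $\{\sqrt[m]{a}\}$; the underlying ideas are the same.
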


\begin{proof} (Based on a suggestion by R.~Jensen.)
First we claim that if a fluxion $y\ge 0$ diverges then $y^n$
diverges. (The claim is false without the assumption that $y \ge 0$.)
If $y$ isn't finite then neither is $y^n$, so the claim is proved in
that case. On the other hand, if $y$ is finite then by the preceding
result we must have $\limsup y \ne \liminf y$, so there exist
subfluxions $u,v \subset y$ with $u \to b$, $v \to c$ for real $b \ne
c$. Then $u^n, v^n \subset y^n$ and $u^n \to b^n$, $v^n \to c^n$, and
$b^n \ne c^n$ so $y^n$ diverges in this case, too.

Now put $y = \sqrt[n]{x}$. The contrapositive of the claim proved in
the preceding paragraph shows that $y$ converges. Let $d$ be the
limit, so $y \to d$. Then by Proposition \ref{AlgebraOfConvergenceThm}
it follows that $y^n \to d^n$, so by uniqueness of limits $d^n = a$
and thus $d = \sqrt[n]{a}$, as desired.
\end{proof}

Convergence of an infinite series $\sum_{n=1}^\infty a_n$ may be
defined as usual, in terms of convergence of the fluxion $s = [s_n] =
[a_1+\cdots + a_n]$.

\section{Continuity and differentiability}\label{sec:continuity}\noindent
We now consider continuity. For this, we need to extend real-valued
functions of a real variable to assume values at fluxions.

If $f$ is a real-valued function of a real variable, let $D \subset
\R$ be its domain. So $f$ maps $D$ into $\R$. Let $\dot{D}$ be the set
of all fluxions $x=[x_n]$ which are eventually in $D$, meaning that
$x_n \in D$ for all but finitely many $n$. Then $f$ extends to a
function, also denoted by $f$, from $\dot{D}$ into $\dot{R}$, by
defining $f(x)$ to be $[f(x_n)]$ if $x = [x_n] \in \dot{D}$.

\begin{defn*}
Let $f\colon D \to \R$ be a real-valued function of a real variable,
and let $a$ be a point of its domain $D$. We say that $f$ is
\emph{continuous at} $a$ if $f(x) \to f(a)$, for every fluxion $x \in
\dot{D}$ for which $x \to a$. One also says that $f$ is
\emph{continuous on} $D$ if $f$ is continuous at $a$ for every $a \in
D$.
\end{defn*}

Note that if $a$ is an isolated point of $D$, meaning that the only $x
\in \dot{D}$ converging to $a$ is $a$ itself, then any $f \colon D \to
\R$ is continuous at $a$. The definition also includes the notions of
left or right continuity, in case the point $a$ is a left or right
endpoint of $D$.

\begin{prop}[Algebra of Continuity]\label{AlgebraOfContinuity}
  Let $f$, $g$ be real-valued functions of a real variable, both
  continuous at some real point $a$. Then $f+g$, $f-g$, and $fg$ are
  continuous at $a$. Moreover, $f/g$ is continuous at $a$ provided
  that $g(a) \ne 0$.
\end{prop}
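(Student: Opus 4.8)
The plan is to reduce the Algebra of Continuity to the Algebra of Convergence (Proposition \ref{AlgebraOfConvergenceThm}), which is the natural strategy in this fluxional framework. Fix a real point $a$ lying in the domain of $f+g$, $f-g$, $fg$ (respectively $f/g$), and let $x$ be any fluxion in the appropriate extended domain with $x \to a$. Since $x$ is eventually in the domains of both $f$ and $g$, the extended functions $f(x)=[f(x_n)]$ and $g(x)=[g(x_n)]$ are well-defined fluxions. By the hypothesis that $f$ and $g$ are each continuous at $a$, we have $f(x)\to f(a)$ and $g(x)\to g(a)$; these are convergent fluxions with real limits.

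Next I would observe that the extended operations are compatible with the pointwise/componentwise structure: $(f+g)(x) = [(f+g)(x_n)] = [f(x_n)+g(x_n)] = [f(x_n)] + [g(x_n)] = f(x)+g(x)$, and similarly $(f-g)(x) = f(x)-g(x)$, $(fg)(x) = f(x)\,g(x)$, and $(f/g)(x) = f(x)/g(x)$ wherever the last makes sense. This is just the definition of the extension of a real function to $\dot{D}$ together with the componentwise definitions of the ring operations on $\dot{\R}$. Having identified $(f+g)(x)$ with $f(x)+g(x)$ etc., Proposition \ref{AlgebraOfConvergenceThm} applied to the convergent fluxions $f(x)\to f(a)$ and $g(x)\to g(a)$ immediately gives $f(x)+g(x) \to f(a)+g(a) = (f+g)(a)$, and likewise for difference and product. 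Since $x$ was an arbitrary fluxion in the relevant extended domain converging to $a$, this is exactly the statement that $f+g$, $f-g$, $fg$ are continuous at $a$.

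For the quotient $f/g$, the extra hypothesis $g(a)\neq 0$ enters in two ways. First, one must check that $a$ actually lies in the domain of $f/g$ in a way that makes the extension sensible: since $g$ is continuous at $a$ and $g(a)\neq 0$, Proposition \ref{thm:invert}(b) shows that $g(x)$ is invertible in $\dot{\R}$ (it avoids $0$) whenever $x\to a$, so $f(x)/g(x) = f(x)\,g(x)^{-1}$ is well-defined; this also guarantees $g$ itself does not vanish on a neighbourhood of $a$, so $f/g$ is genuinely defined near $a$. Then the quotient clause of Proposition \ref{AlgebraOfConvergenceThm}, with $b=g(a)\neq 0$, yields $f(x)/g(x)\to f(a)/g(a)=(f/g)(a)$, and continuity of $f/g$ at $a$ follows.

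I do not expect any serious obstacle here; the whole argument is bookkeeping that transports Proposition \ref{AlgebraOfConvergenceThm} through the definition of the extended functions. The one point requiring a moment's care is the quotient case, where one should not merely quote the algebra-of-convergence result but also confirm that $f/g$ has $a$ in its domain and that $g(x)$ is invertible in $\dot{\R}$ for the relevant $x$ — both of which are handed to us by Proposition \ref{thm:invert}(b) once we know $g(a)\neq 0$ and $g$ is continuous at $a$. Everything else is immediate from the definitions.
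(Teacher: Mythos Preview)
Your proposal is correct and follows exactly the approach the paper takes: the paper's entire proof is the one-line instruction to combine the definition of continuity with Proposition~\ref{AlgebraOfConvergenceThm}. You have simply unpacked that instruction in detail, including the (appropriate) invocation of Proposition~\ref{thm:invert}(b) for the quotient case.
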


\begin{proof}
Combine the definition with Proposition \ref{AlgebraOfConvergenceThm}.
\end{proof}

As an application, notice that Proposition \ref{thm:nth-root} says
that the real-valued function $x \mapsto x^{1/n}$ is continuous on the
interval $[0, \infty)$. It then follows from the proposition that the
  real-valued function $x \mapsto x^{m/n}$ is continuous on the
  interval $[0, \infty)$; that is, taking rational powers is a
    continuous operation.

\begin{thm*}[Intermediate Value Theorem] 
  Let $f$ be a real-valued function defined and continuous (at least)
  on the interval $[a,b]$, such that $f(a) \ne f(b)$. If $i$ is any
  real number strictly between $f(a)$ and $f(b)$, then there exists
  some real number $c$ strictly between $a$ and $b$ such that $f(c) =
  i$.
\end{thm*}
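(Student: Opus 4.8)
The plan is to use the standard bisection method, adapted to the fluxional language by building a single fluxion whose terms are the successive midpoints produced by the bisection. Without loss of generality assume $f(a) < f(b)$, so $f(a) < i < f(b)$ (the other case follows by replacing $f$ with $-f$ and $i$ with $-i$). I will construct recursively a nested sequence of intervals $[a_k, b_k]$, starting with $[a_1,b_1] = [a,b]$, such that at each stage $f(a_k) < i < f(b_k)$ and $b_k - a_k = (b-a)/2^{k-1}$. At step $k$, let $m_k = (a_k+b_k)/2$; if $f(m_k) < i$ put $[a_{k+1},b_{k+1}] = [m_k, b_k]$, if $f(m_k) > i$ put $[a_{k+1},b_{k+1}] = [a_k, m_k]$, and if $f(m_k) = i$ then $m_k$ is the desired real number and we are done (so we may assume this never happens). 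The sequences $(a_k)$ and $(b_k)$ are monotonic and bounded, hence by the Monotone Convergence Theorem the fluxions $[a_k]$ and $[b_k]$ converge, say $[a_k] \to c$ and $[b_k] \to c'$ with $c, c' \in [a,b]$. Since $b_k - a_k = (b-a)/2^{k-1}$ defines an infinitesimal fluxion (it is positive, decreasing, and eventually smaller than any positive real by the archimedean property), we get $c' - c = 0$, i.e. $c = c'$, and moreover $a < c < b$ since $a < m_1 \le a_k$ or $b_k \le m_1 < b$ forces $c$ strictly between $a$ and $b$.

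It remains to show $f(c) = i$. Here I invoke continuity of $f$ at $c$: the fluxion $[a_k] \to c$ lies in $\dot D$ (eventually in $[a,b] \subset D$), so $f([a_k]) = [f(a_k)] \to f(c)$, and likewise $[f(b_k)] \to f(c)$. But by construction $f(a_k) < i$ for every $k$ and $f(b_k) > i$ for every $k$, so the constant fluxion $i$ satisfies $[f(a_k)] \le i \le [f(b_k)]$ (indeed strict, though $\le$ suffices). Applying Proposition \ref{le}(b) to $[f(a_k)] \le [i]$ gives $f(c) \le i$, and applying it to $[i] \le [f(b_k)]$ gives $i \le f(c)$; hence $f(c) = i$, as required.

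The main obstacle is not any single deep step but rather the bookkeeping of the recursive construction: one must verify that the recursion is well-defined and that the invariant $f(a_k) < i < f(b_k)$ is genuinely maintained at every stage, and one must be careful that $c$ is \emph{strictly} between $a$ and $b$ rather than merely weakly so (this is why one tracks that $m_1$ separates all the later $a_k$ from all the later $b_k$). A secondary subtlety worth a sentence is the passage from ``$f(a_k) < i$ for all $k$'' to a conclusion about the limit: one cannot conclude $f(c) < i$ (strict inequality need not pass to the limit, as the excerpt notes right after Proposition \ref{le}), but the weak inequality $f(c) \le i$ is exactly what Proposition \ref{le}(b) delivers, and combining it with the symmetric bound from the $b_k$ side pins down $f(c) = i$ without any strictness being needed in the limit.
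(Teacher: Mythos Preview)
Your proof is correct and follows essentially the same bisection argument as the paper (the paper first replaces $f$ by $f-i$ to reduce to $i=0$, but this is purely cosmetic). One small slip: the clause ``$a < m_1 \le a_k$ or $b_k \le m_1 < b$'' yields only one of the two strict bounds $a<c$, $c<b$ (whichever half is chosen at the first step), not both; the clean way to get $a<c<b$ is to note, \emph{after} you have shown $f(c)=i$, that $c\ne a$ and $c\ne b$ simply because $f(a)\ne i\ne f(b)$.
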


\begin{proof}
A standard argument, based on the bisection method, easily translates
into the language of fluxions. First we observe that by replacing $f$
by $f-i$ we are reduced to the case where the intermediate value $i$
is zero, and $f(a)$ and $f(b)$ have different signs (i.e.,
$f(a)f(b)<0$). Then we put $x_0 = a$, $y_0= b$. Computing the midpoint
$m_0 = \frac{1}{2}(x_0+y_0)$ of the interval $[x_0, y_0]$, we have
three cases:
\begin{enumerate}
\item If $f(m_0) = 0$ then we are done (put $c = m_0$). 
\item Otherwise, if $f(m_0)$ has the same sign as $f(x_0)$, put $x_1 =
  m_0$ and $y_1 = y_0$.
\item Otherwise, put $x_1 = x_0$ and $y_1=m_0$. 
\end{enumerate}
Assuming we didn't already find $c$, this produces a new interval
$[x_1, y_1]$, half the size of the original, satisfying
$f(x_1)f(y_1)<0$. So the process can be repeated, to produce a new
interval $[x_2, y_2]$, and so on. Either this process terminates after
finitely many steps (by finding some $m_n = \frac{1}{2}(x_n+y_n)$ such
that $f(m_n) = 0$) or we obtain two infinite sequences $(x_n)$,
$(y_n)$ defining fluxions $x:= [x_n]$, $y := [y_n]$. Now in the latter
case both $x$ and $y$ are monotonic, and hence convergent by the
monotone convergence theorem.  Furthermore, $y-x \to 0$ is
infinitesimal since $y_n-x_n = \frac{1}{2^n}(b-a)$ for all $n$, so $x$
and $y$ have the same limit $c$. Finally, by construction all the
$f(x_n)$ have the same sign as $f(x_0)=f(a)$, and all the $f(y_n)$
have the same sign as $f(y_0)=f(b)$. It follows that either $f(x)>0$
and $f(y)<0$, or else $f(x)<0$ and $f(y)>0$. In either case, both
$f(x)$ and $f(y)$ converge to $f(c)$ by continuity of $f$, so by
Proposition \ref{le} it follows that $0 \le f(c)\le 0$, and $f(c)=0$.
\end{proof}

Next we consider uniform continuity. In order to formulate it, we
require the following concept.

\begin{defn*}
  Two fluxions $x,y$ are \emph{infinitely close} (written as $x
  \approx y$) if their difference $x-y$ is infinitesimal.
\end{defn*}

It is easily checked that infinite closeness is an equivalence
relation on the set $\dot{\R}$ of fluxions.  Notice that if $a$ is
real then $x \approx a$ if and only if $x \to a$.  It is easy to
construct examples of divergent fluxions $x,y$ with $x \approx y$; for
instance $x=[n]$ and $y=[n+1/n]$.

\begin{defn*}
  Let $f$ be a real-valued function of a real variable, and let $E$ be
  a subset of its domain. We say that $f$ is \emph{uniformly
    continuous} on the set $E$ if $f(x) \approx f(y)$ whenever $x
  \approx y$, for any fluxions $x,y \in \dot{E}$.
\end{defn*}

It is clear that if $f$ is uniformly continuous on $E$ then $f$ is
continuous on $E$ (\emph{i.e.}, continuous at every point of $E$). The
converse fails. For example, the function $f(x) = x^2$ is continuous
on $\R$ but not uniformly continuous on $\R$, because $x=[n]$ and
$y=[n+\frac{1}{n}]$ are infinitely close inputs producing
outputs $$f(x) = [n^2], \quad f(y)=[n^2+2+1/n^2]$$ that are not
infinitely close. 

Let us now consider differentiability. Some slight care is required in
order to ensure that the difference quotient is defined for all
fluxions under consideration.

\begin{defn*}
Let $f\colon D \to \R$ be a real-valued function of a real
variable. Let $a$ be a point of $D$ with the property that there is
some fluxion $x \in \dot{D}$ with $x \to a$ but such that $x$ avoids
$a$. (Such $a$ are called accumulation points of $D$.) Then $f$ is
said to be \emph{differentiable} at $a$ provided that
$\frac{f(x)-f(a)}{x-a}$ converges to a real number $f'(a)$ for every
fluxion $x \in \dot{D}$ with $x$ avoiding $a$ and $x \to a$. The real
number $f'(a)$ is called the derivative of $f$ at $a$.
\end{defn*}

One could derive the standard rules of differentiation at this point.

\begin{prop}
  Differentiability implies continuity.
\end{prop}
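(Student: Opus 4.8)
The plan is to turn the classical identity $f(x)-f(a) = \frac{f(x)-f(a)}{x-a}\,(x-a)$ into a fluxional statement of the shape ``finite times infinitesimal is infinitesimal.'' Fix a point $a \in D$ at which $f$ is differentiable. By the definition of differentiability, $a$ is an accumulation point of $D$, so I may fix once and for all an auxiliary fluxion $w = [w_n] \in \dot{D}$ with $w \to a$ and $w$ avoiding $a$. Let $x = [x_n] \in \dot{D}$ be an arbitrary fluxion with $x \to a$; the goal is to show $f(x) \to f(a)$.

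The one genuine obstacle is that the difference quotient $\frac{f(x)-f(a)}{x-a}$ need not be a fluxion at all: if $x$ fails to avoid $a$, then $x-a$ is not invertible and differentiability cannot be applied to $x$ directly. I would circumvent this by ``repairing'' $x$ at the indices where it hits $a$: put $\tilde{x}_n := x_n$ when $x_n \ne a$ and $\tilde{x}_n := w_n$ when $x_n = a$, and set $\tilde{x} := [\tilde{x}_n]$. Three short checks — each amounting to noting that a relevant exceptional index set is contained in the union of the exceptional sets coming from $x$ and from $w$ — show that $\tilde{x} \in \dot{D}$, that $\tilde{x}$ avoids $a$, and that $\tilde{x} \to a$.

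Now differentiability applies to $\tilde{x}$: the quotient $q := \frac{f(\tilde{x})-f(a)}{\tilde{x}-a}$ converges to $f'(a)$, hence is finite (a convergent fluxion is finite). Since $\tilde{x}-a \to 0$ is infinitesimal, the product $f(\tilde{x})-f(a) = q\,(\tilde{x}-a)$ is a finite fluxion times an infinitesimal, hence infinitesimal (because $\I$ is an ideal in $\F$), and in particular $|f(\tilde{x})-f(a)| \to 0$. Finally I would push this back down to $x$: componentwise $|f(x_n)-f(a)| = |f(\tilde{x}_n)-f(a)|$ whenever $x_n \ne a$ and $|f(x_n)-f(a)| = 0$ whenever $x_n = a$, so in all cases $|f(x_n)-f(a)| \le |f(\tilde{x}_n)-f(a)|$, giving $0 \le |f(x)-f(a)| \le |f(\tilde{x})-f(a)|$ in $\dot{\R}$. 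Proposition \ref{le}(c) (the squeeze) then forces $|f(x)-f(a)| \to 0$, i.e.\ $f(x) \to f(a)$; since $x$ was arbitrary, $f$ is continuous at $a$. I expect the repair construction, together with the verification that $\tilde{x}$ still avoids $a$ and still converges to $a$, to be the only step needing care — everything after it is a direct appeal to the algebra of finite fluxions and infinitesimals and to the squeeze property.
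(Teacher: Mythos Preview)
Your argument is correct, and the core mechanism---rewriting $f(x)-f(a)$ as the product of a convergent (hence finite) difference quotient with the infinitesimal $x-a$, then invoking that $\I$ is an ideal in $\F$---is exactly what the paper does. The only difference is in the treatment of fluxions $x\to a$ that fail to avoid $a$: the paper dispatches this case in a single unsupported sentence (``it also follows that $f(x)\to f(a)$''), whereas you supply the missing work via the repair $\tilde{x}$ and a squeeze. Your version is therefore more complete; what it buys is an honest justification of precisely the step the paper leaves to the reader, at the cost of a short auxiliary construction. Both proofs are the same argument at heart.
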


\begin{proof}
Put $h = x-a$, where $x \in \dot{D}$ and $x$ avoids $a$. Then $h$ is
an invertible infinitesimal and $\frac{f(x)-f(a)}{h} \to f'(a)$. Since
$h \to 0$ it follows that $h \cdot \frac{f(x)-f(a)}{h} \to 0 \cdot
f'(a)$.  In other words, $f(x) - f(a) \to 0$, so $f(x) \to f(a)$.  If
$x \in \dot{D}$ converges to $a$ but doesn't avoid $a$, then it also
follows that $f(x) \to f(a)$, so the proof is complete.
\end{proof}

Differentials may be treated as usual, by introducing $dx$ as another
independent variable, and if $y = f(x)$ is a function, defining $dy$
by the rule $dy = f'(x)dx$. If $dx$ is allowed to take on
infinitesimal values, then $dy$ becomes infinitesimal as well, for all
values of $x$ for which $f$ is differentiable.

Limits of functions can also be defined using infinitesimal
language. The notion of invertibility in the ring $\dot{\R}$ is a
crucial ingredient of the definition. For the sake of economy, it is
convenient to work within the extended real number system $\R \cup \{
\infty, -\infty\}$. This allows the various cases of infinite limits
as well as finite ones to be treated with a single definition.

\begin{defn*}
Let $f \colon D \to \R$ be a real-valued function of a real variable,
and suppose that there is some fluxion $x \in \dot{D}$ with $x$
avoiding $a$, such that $x \to a$ for some extended real number $a$.
We say that $f(x)$ approaches an extended real number $L$ as $x$
approaches $a$, and write $$\displaystyle \lim_{x\to a} f(x) = L$$ if
$f(x) \to L$ for every $x \to a$ such that $x$ avoids $a$.
\end{defn*}

In other words, the equality $\lim_{x \to a} f(x) = L$ means that
$f(x)$ approaches $L$ for every $x \in \dot{D}$ that approaches and
avoids $a$, and that there must be at least one such $x \in \dot{D}$.
The existence and value of $f(a)$, if any, is irrelevant for the
existence and value of the limit. 

We note also that one-sided limits can be obtained as a special case
of the definition, by merely restricting the domain appropriately. The
details are left to the reader.

At this point one may prove the following standard property of
continuity, sometimes taken as its definition.

\begin{prop}
  Let $f \colon D \to \R$ be a real-valued function of a real
  variable, and suppose $a$ is a point of $D$. Then $f$ is continuous
  at $a$ if and only if $\lim_{x\to a} f(x) = f(a)$.
\end{prop}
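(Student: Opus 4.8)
The plan is to recognize this as the fluxional analogue of the classical equivalence between sequential continuity and the limit formulation of continuity, and to push the standard argument through. The one genuine wrinkle is that the definition of ``continuous at $a$'' quantifies over \emph{all} fluxions $x \in \dot D$ with $x \to a$, whereas $\lim_{x\to a} f(x) = f(a)$ speaks only about those $x$ that additionally avoid $a$ (and moreover demands that at least one such $x$ exist). Accordingly I would run the proof under the implicit hypothesis that $a$ is an accumulation point of $D$; for an isolated point the earlier remark already gives continuity outright, and there the limit notation is simply not applicable, so that case is set aside.

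For the forward implication, suppose $f$ is continuous at $a$. Since $a$ is an accumulation point there is at least one $x \in \dot D$ with $x \to a$ and $x \avoids a$, so the existence clause in the definition of $\lim_{x\to a} f(x)$ is satisfied. For any such $x$, continuity at $a$ applied to $x$ (which in particular has $x \to a$) yields $f(x) \to f(a)$. Hence $\lim_{x\to a} f(x) = f(a)$.

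For the converse, suppose $\lim_{x\to a} f(x) = f(a)$, and let $x = [x_n] \in \dot D$ be \emph{any} fluxion with $x \to a$; we must show $f(x) \to f(a)$. If $x \avoids a$, this is immediate from the hypothesis. Otherwise, use the accumulation hypothesis to fix a fluxion $w = [w_n] \in \dot D$ with $w_n \ne a$ for all $n$ (adjusting finitely many terms if necessary) and $w \to a$, and define $z_n = x_n$ when $x_n \ne a$ and $z_n = w_n$ when $x_n = a$. Then $z = [z_n]$ lies in $\dot D$, avoids $a$ (each $z_n \ne a$), and converges to $a$: given real $\ep > 0$, for all large $n$ both $|x_n - a| < \ep$ and $|w_n - a| < \ep$, hence $|z_n - a| < \ep$. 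By hypothesis $f(z) \to f(a)$, i.e.\ $|f(z_n) - f(a)| < \ep$ for all large $n$, for every real $\ep > 0$. Finally $f(x_n) = f(z_n)$ when $x_n \ne a$ and $f(x_n) = f(a)$ when $x_n = a$, so $|f(x_n) - f(a)| \le |f(z_n) - f(a)|$ for every $n$; therefore $f(x) \to f(a)$, as required.

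I expect the main obstacle to be exactly this non-avoiding case of the converse: one cannot feed such an $x$ directly into the limit definition, so the work lies in manufacturing the auxiliary avoiding fluxion $z$ by splicing $x$ with a known avoiding approximant $w$, and then checking carefully --- via the ``for all but finitely many $n$'' bookkeeping --- that $z$ still lies in $\dot D$, still avoids $a$, still converges to $a$, and that convergence of $f(z)$ transfers back to convergence of $f(x)$. This is also the point where the accumulation-point hypothesis is indispensable, since it is what guarantees the splicing partner $w$ exists; for an isolated $a$ there is nothing to mix with, which is precisely why that case has to be handled separately.
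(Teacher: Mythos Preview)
Your argument is correct and follows the same outline as the paper: the forward implication is immediate, and for the converse the only issue is the case of a fluxion $x \to a$ that fails to avoid $a$. The paper's own proof at this point simply asserts ``It is still true that $f(x) \to f(a)$'' without further justification, so your splicing construction with an auxiliary avoiding fluxion $w$ in fact supplies the missing detail; your explicit flagging of the accumulation-point hypothesis (which the paper tacitly assumes in order for the limit notation to be defined) is likewise a genuine improvement. A slightly leaner variant dispenses with the external $w$: partition the indices into those with $x_n = a$ (where $f(x_n) = f(a)$ exactly) and those with $x_n \ne a$; if the latter set is infinite it determines a subfluxion of $x$ that avoids $a$ and still converges to $a$, so the hypothesis applies to it directly, and recombining gives $f(x) \to f(a)$.
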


\begin{proof}
From the definitions it is clear that if $f$ is continuous at $a$ then
$\lim_{x\to a} f(x) = f(a)$. Conversely, suppose that $\lim_{x\to a}
f(x) = f(a)$. Then $f(x) \to f(a)$ for all $x \to a$ such that $x$
avoids $a$. We need to show that $f(x) \to f(a)$ for all $x \to a$,
even if $x$ does not avoid $a$. So assume that $x$ does not avoid $a$
and $x\to a$. This means that if we write $x = [x_n]$ then for any $N
\in \N$ there are infinitely many values of $n\ge N$ such that
$x_n=a$. It is still true that $f(x) \to f(a)$, so the proof is
complete.
\end{proof}

\section{Topology}\noindent
The language of infinitesimals is appropriate for discussing the
topology of the real line. Indeed, we have already encountered
isolated points and accumulation points. Here are infinitesimal
definitions of the standard topological notions.

\begin{defn*}
Let $E$ be a subset of $\R$. The set $E$ is \emph{closed} if every
convergent fluxion in $\dot{E}$ converges to a point of $E$ (\emph{i.e.},
$x\in \dot{E}$ and $x \to a$ for $a$ real implies $a \in E$). A real
point $a$ is said to be an \emph{accumulation point} of $E$ if there
is some $x \in \dot{E}$ such that $x$ avoids $a$ and $x \to a$.  A
point $a \in E$ is said to be an \emph{isolated} point of $E$ if the
only $x \in \dot{E}$ converging to $a$ is $x = a$.  Finally, we say
that $E$ is \emph{open} if for every $a \in E$, $\dot{E}$ contains
every fluxion $x \in \dot{\R}$ which converges to $a$ (\emph{i.e.}, $x \in
\dot{\R}$ and $x \to a \in E$ implies $x \in \dot{E}$).
\end{defn*}

Clearly, $E$ is closed if and only if $E$ contains all of its limits.
The limit of a fluxion in $\dot{E}$ either belongs to $E$ or is an
accumulation point of $E$, so $E$ is closed if and only if $E$
contains all of its accumulation points. A finite set has no
accumulation points since all its points are isolated, so finite sets
are closed.

The notion of accumulation point already arose in the discussion of
limits of functions. Looking back at the definition, we see that in
order for $\lim_{x\to a} f(x)$ to exist in the extended real number
system, it is necessary that $a$ be an accumulation point for the
domain of $f$.

In order to prove some of the standard topological results about $\R$,
it is useful to have the following lemma available.

\begin{lem*}
  Let $E, F$ be subsets of $\R$, and $\{E_\alpha\}$ a family of
  subsets of $\R$. Then
\item{\upshape(a)} $E \subset F$ implies $\dot{E} \subset \dot{F}$;
\item{\upshape(b)} $\bigcup \big(\dot{E}_\alpha\big)\subset \dot{U}$,
  where $U := \big(\bigcup E_\alpha\big)$;
\item{\upshape(c)} $\bigcap \big(\dot{E}_\alpha\big) = \dot{V}$, where
  $V:= \big(\bigcap E_\alpha\big)$;
\item{\upshape(d)} $E \cap F = \emptyset$ implies $\dot{E} \cap \dot{F}
  = \emptyset$.
\item{\upshape(e)} If $F = \R-E$ is the complement of $E$, then
  $\dot{F} \subset \dot{\R} - \dot{E}$.
\end{lem*}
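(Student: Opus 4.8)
The plan is to verify each of the five statements directly from the definition of $\dot{E}$ as the set of fluxions $x=[x_n]$ with $x_n \in E$ for all but finitely many $n$. Each part reduces to a routine set-theoretic observation about "eventually in" conditions, so the real work is just bookkeeping with the phrase "all but finitely many $n$." I would take the parts in the given order, since (a) feeds into the proof of (b).

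For (a): if $E \subset F$ and $x = [x_n] \in \dot{E}$, then $x_n \in E \subset F$ for all but finitely many $n$, so $x \in \dot{F}$. For (b): if $x \in \bigcup_\alpha \dot{E}_\alpha$, then $x \in \dot{E}_\beta$ for some index $\beta$; since $E_\beta \subset U$, part (a) gives $x \in \dot{U}$. For (c): one inclusion is that $V \subset E_\alpha$ for every $\alpha$, so by (a) $\dot{V} \subset \dot{E}_\alpha$ for every $\alpha$, hence $\dot{V} \subset \bigcap_\alpha \dot{E}_\alpha$. For the reverse inclusion, I will need to be careful, which is the one genuinely non-obvious point: if $x = [x_n] \in \bigcap_\alpha \dot{E}_\alpha$, then for each $\alpha$ there is $N_\alpha$ with $x_n \in E_\alpha$ for $n \ge N_\alpha$. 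If the family is infinite, $\sup_\alpha N_\alpha$ may be infinite, so I cannot simply take a uniform $N$ over all $\alpha$. The fix is to argue pointwise in the other direction: fix $n$; I claim that if $n$ is large enough, then $x_n \in V$. Hmm — that also fails for the same reason. The correct argument is instead: $x \in \bigcap_\alpha \dot{E}_\alpha$ means $x \in \dot{E}_\alpha$ for all $\alpha$. I want $x \in \dot{V}$, i.e., $x_n \in \bigcap_\alpha E_\alpha$ for all but finitely many $n$. Suppose not; then there are infinitely many $n$ with $x_n \notin E_{\alpha(n)}$ for some index $\alpha(n)$. This does not immediately produce a single bad $\alpha$ unless the family is finite. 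So in fact statement (c) as written requires the family to be finite (or one argues it only for finite families, as is standard — the intersection of closures behaves like finite intersections). I would flag this and prove (c) under the hypothesis that the family is finite, reducing by induction to two sets: $x \in \dot{E} \cap \dot{F}$ means $x_n \in E$ for $n \ge N_1$ and $x_n \in F$ for $n \ge N_2$, so $x_n \in E \cap F$ for $n \ge \max(N_1, N_2)$, giving $x \in \dot{V}$. For (d): this is the case $E \cap F = \emptyset$ of the easy inclusion $\dot{E} \cap \dot{F} \subset \dot{E \cap F}$ just established, since $\dot{\emptyset} = \emptyset$. For (e): if $x \in \dot{F}$ with $F = \R - E$, then $x_n \notin E$ for all but finitely many $n$, so in particular $x_n = x_n$ is not eventually in $E$; thus $x \notin \dot{E}$, giving $\dot{F} \subset \dot{\R} - \dot{E}$ (and the inclusion is strict in general, e.g. $x = [1 + (-1)^n]$ with $E = \{0\}$ lies in neither $\dot{E}$ nor $\dot{F}$).

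The main obstacle, then, is part (c): the equality holds only for finite families, and more generally one has just the inclusion $\dot{V} \subset \bigcap_\alpha \dot{E}_\alpha$ for arbitrary families. I expect the cleanest writeup states (c) for two sets and notes the finite case follows by induction, while (a), (b), (d), (e) are immediate from unwinding "all but finitely many." Everything else is a one-line verification with no analytic content.
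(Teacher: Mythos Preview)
Your arguments for (a), (b), (d), and (e) match the paper's essentially verbatim; the paper dispatches each of these in a single line, deriving (d) from (c) and (e) from (d) just as you do.

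For (c) you are right and the paper is sloppy. The paper's entire argument for the reverse inclusion reads: ``if $x \in \bigcap(\dot{E}_\alpha)$ then for any $\alpha$, $x \in \dot{E}_\alpha$, so $x \in \dot{V}$'' --- with no justification of that ``so.'' This is exactly the gap you flagged, and your suspicion that it fails for infinite families is correct. A concrete counterexample: take $E_k = \{k, k+1, k+2, \dots\}$ for $k \in \N$, so that $V = \bigcap_k E_k = \emptyset$ and hence $\dot{V} = \emptyset$; yet the fluxion $x=[n]$ lies in every $\dot{E}_k$, so $\bigcap_k \dot{E}_k \ne \dot{V}$. Your restriction of (c) to finite families, proved via the two-set case $x_n \in E \cap F$ for $n \ge \max(N_1,N_2)$, is the correct statement. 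The paper only ever invokes (c) for a finite collection (when showing finite intersections of open sets are open), so nothing downstream is affected; but your write-up is the more careful one here.
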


\begin{proof}
Part (a) is obvious. Part (b) simply says that a fluxion whose range
lies in some $E_\alpha$ lies in the union of all $E_\alpha$, so that
is also clear. 

To get (c), notice that $\big( \bigcap E_\alpha \big) \subset
E_\alpha$ implies by part (a) that $\dot{V} \subset
\dot{E}_\alpha$. Since this holds for any $\alpha$, we have $\dot{V}
\subset \bigcap \big(\dot{E}_\alpha \big)$. For the opposite
inclusion, observe that if $x \in \bigcap \big(\dot{E}_\alpha \big)$
then for any $\alpha$, $x \in \dot{E}_\alpha$, so $x \in \dot{V}$, as
desired.

Part (d) is obvious (it also follows from part (c)), and part (e)
follows immediately from part (d).
\end{proof}

The lemma is used in the proofs of the next two results.

\begin{prop}\label{prop:complements}
  A set $E$ of reals is open if and only if its complement $\R-E$ is
  closed.
\end{prop}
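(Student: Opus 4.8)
The plan is to prove both implications directly from the infinitesimal definitions of \emph{open} and \emph{closed}, using Lemma~(e) from the preceding excerpt to control complements of fluxion-extensions. Write $F = \R - E$. I would unwind what each side asserts: ``$E$ open'' says that whenever $x \in \dot\R$ and $x \to a$ with $a \in E$, then $x \in \dot E$; ``$F$ closed'' says that whenever $x \in \dot F$ and $x \to b$ with $b$ real, then $b \in F$. The bridge between the two is the observation that a real number $a$ lies in $E$ if and only if $a \notin F$, together with the fact that a fluxion $x = [x_n]$ \emph{fails} to be eventually in $E$ precisely when $x_n \in F$ for infinitely many $n$.

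First I would prove the forward direction: assume $E$ is open and show $F$ is closed. Take $x = [x_n] \in \dot F$ with $x \to b$, $b$ real; I must show $b \in F$, i.e. $b \notin E$. Suppose for contradiction that $b \in E$. Since $E$ is open and $x \to b \in E$, openness forces $x \in \dot E$, meaning $x_n \in E$ for all but finitely many $n$. But $x \in \dot F$ means $x_n \in F = \R - E$ for all but finitely many $n$. For all sufficiently large $n$ we would then have $x_n \in E \cap F = \emptyset$, which is impossible. Hence $b \notin E$, so $b \in F$, and $F$ is closed.

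For the converse, assume $F = \R - E$ is closed and show $E$ is open. Take $x = [x_n] \in \dot\R$ with $x \to a$ for some $a \in E$; I must show $x \in \dot E$. Suppose not: then $x_n \notin E$, i.e. $x_n \in F$, for infinitely many $n$. Extracting those indices gives a subfluxion $y \subset x$ with $y \in \dot F$ (indeed every term of $y$ lies in $F$). Since $x \to a$, also $y \to a$ (a subfluxion of a convergent fluxion has the same limit, as noted in the excerpt). Because $F$ is closed and $y \in \dot F$ converges to the real number $a$, we get $a \in F = \R - E$, contradicting $a \in E$. Therefore $x_n \in E$ for all but finitely many $n$, i.e. $x \in \dot E$, so $E$ is open.

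The only mildly delicate point — and the step I expect to require the most care — is the converse direction, where I pass from ``$x_n \notin E$ for infinitely many $n$'' to an honest subfluxion $y \in \dot F$ converging to $a$; this uses the definition of subfluxion and the (already-noted) fact that convergence is inherited by subfluxions. One could alternatively phrase the converse contrapositively via Lemma~(e) applied to $\dot F \subset \dot\R - \dot E$, but the subfluxion argument is the cleanest and is entirely routine. Everything else is a direct translation of the definitions together with $E \cap F = \emptyset$.
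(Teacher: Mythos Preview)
Your proof is correct and follows essentially the same route as the paper's: both directions argue by contradiction, with the forward direction using that $\dot E$ and $\dot F$ are disjoint (your $E\cap F=\emptyset$ argument is just an explicit unpacking of the paper's appeal to $x\in\dot F\implies x\notin\dot E$), and the converse extracting a subfluxion in $\dot F$ from infinitely many terms outside $E$ to contradict closedness of $F$.
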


\begin{proof}
Suppose that $E$ is open. Let $F$ be the complement of $E$ and suppose
$x \in \dot{F} = \dot{\R} - \dot{E}$. Assume that $x \to a$ where $a$
is real. If $a \in E$ then $x \in \dot{E}$, a contradiction. So $a \in
F$ and thus $F$ is closed.

On the other hand, if $F =\R-E$ is closed, consider any real $a \in E$
and any fluxion $x$ with $x \to a$.  We must show that $x \in
\dot{E}$.  If not, then any sequence $(x_n)$ defining $x$ contains
infinitely many points of $F$, and thus there would be a subfluxion
$y$ of $x$ with $y \in \dot{F}$.  Since $y \to a$ and $F$ is closed,
this implies that $a \in F$, a contradiction. Thus $x \in \dot{E}$, as
required.
\end{proof}

\begin{prop}
  The union of any collection of open sets in $\R$ is again open. A
  finite intersection of open sets in $\R$ is open.  The empty set and
  $\R$ itself are both open (and closed).  Thus the collection of open
  subsets of $\R$ forms a topology on $\R$.
\end{prop}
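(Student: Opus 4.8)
The plan is to verify the three defining properties of a topology directly from the infinitesimal definition of openness: closure under arbitrary unions, closure under finite intersections, and membership of $\emptyset$ and $\R$. The only real tool needed beyond the definitions is the lemma above, which transports unions and intersections of subsets of $\R$ through the operation $E \mapsto \dot{E}$; Proposition~\ref{prop:complements} is available but is not strictly required.

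For the union, let $\{E_\alpha\}$ be any family of open subsets of $\R$ and set $U = \bigcup E_\alpha$. Given a real number $a \in U$ and a fluxion $x$ with $x \to a$, choose an index $\alpha_0$ with $a \in E_{\alpha_0}$; openness of $E_{\alpha_0}$ gives $x \in \dot{E}_{\alpha_0}$, and since $E_{\alpha_0} \subset U$, part (a) of the lemma gives $\dot{E}_{\alpha_0} \subset \dot{U}$, hence $x \in \dot{U}$. Thus $U$ is open. For a finite intersection $V = E_1 \cap \cdots \cap E_n$ of open sets, take a real $a \in V$ and a fluxion $x$ with $x \to a$. For each $i$ we have $a \in E_i$, so openness gives $x \in \dot{E}_i$; therefore $x$ lies in $\dot{E}_1 \cap \cdots \cap \dot{E}_n$, which equals $\dot{V}$ by part (c) of the lemma (equivalently: writing $x = [x_n]$, if $x_n \in E_i$ for $n \ge N_i$ then $x_n \in V$ for all $n \ge \max(N_1,\dots,N_n)$). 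Hence $x \in \dot{V}$ and $V$ is open.

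It remains to treat the extreme cases. The set $\emptyset$ is open since the condition in the definition is vacuously satisfied, and $\R$ is open because $\dot{\R}$ already contains every fluxion, so in particular every fluxion converging to a real point. For closedness, $\dot{\emptyset} = \emptyset$, so $\emptyset$ vacuously contains the limits of all its fluxions, while $\R$ is closed because every convergent fluxion converges by definition to a real number, which of course lies in $\R$; this is also consistent with Proposition~\ref{prop:complements}, as $\emptyset$ and $\R$ are complements of one another. Having checked all three axioms, we conclude that the collection of open subsets of $\R$ is a topology. The argument is essentially bookkeeping once the lemma is in hand, and there is no serious obstacle; the one point genuinely worth attention is the restriction to \emph{finite} intersections, since amalgamating the ``all but finitely many $n$'' exceptions for the $\dot{E}_i$ into one finite exception uses finiteness, and indeed an infinite intersection of open sets need not be open, as $\bigcap_{n \in \N}(-1/n,1/n) = \{0\}$ illustrates.
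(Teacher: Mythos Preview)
Your proof is correct and follows essentially the same route as the paper: for the union you pick an $E_{\alpha_0}$ containing $a$ and use $\dot{E}_{\alpha_0}\subset\dot{U}$ via part~(a) of the lemma, and for the finite intersection you use part~(c) to identify $\bigcap_i \dot{E}_i$ with $\dot{V}$, exactly as the paper does. You additionally spell out the $\emptyset$ and $\R$ cases (which the paper leaves implicit) and add a helpful remark on why finiteness matters; the only cosmetic blemish is the clash of the symbol $n$ in ``$n\ge\max(N_1,\dots,N_n)$,'' where $n$ is simultaneously the sequence index and the number of sets.
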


\begin{proof}
Let $\{G_\alpha\}$ be an arbitrary collection of open sets, and put $G
= \bigcup_\alpha G_\alpha$. If $a \in G$ then $a \in G_\alpha$ for
some $\alpha$, and thus every fluxion $x$ such that $x \to a$ belongs
to $\dot{G}_\alpha \subset \dot{G}$, so $G$ is open.

Now suppose we have a finite collection $\{G_1, \dots, G_k\}$ of open
sets, and put $H = G_1 \cap G_2 \cap \cdots \cap G_k$. If $a \in H$
then $a \in G_j$ for each $j=1, \dots, k$, so any fluxion $x$ such
that $x \to a$ has the property that $x \in \dot{G}_j$, for $j = 1,
\dots, k$. But then $x \in \dot{G}_1 \cap \dot{G}_2 \cap \cdots \cap
\dot{G}_k = \dot{H}$, and thus $H$ is open in $\R$.
\end{proof}

Compactness may also be defined via infinitesimals. (This is based on
the nontrivial fact that for metric spaces, compactness and sequential
compactness are equivalent.)

\begin{defn*}
Let $K$ be a subset of $\R$.  We say that $K$ is \emph{compact} if
every fluxion $x \in \dot{K}$ has a convergent subfluxion $u \subset
x$, converging to a point of $K$.
\end{defn*}

Clearly, any finite set $K = \{a_1, a_2, \dots, a_k\}$ of real numbers
is compact, because any fluxion $x \in \dot{K}$ must have a constant
subfluxion.

Here are some simple consequences of the definition of compactness.

\begin{prop} \label{thm:cmpct-implies-closed}
Let $K \subset \R$ be compact.
  \item{\upshape(a)} $K$ is closed.
  \item\noindent{\upshape(b)} Any closed subset $F$ of $K$ is compact.
  \item\noindent{\upshape(c)} Any infinite subset $E$ of $K$ must have
    an accumulation point in $K$.
\end{prop}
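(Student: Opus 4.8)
The plan is to prove the three parts of Proposition~\ref{thm:cmpct-implies-closed} in order, using the fluxional definitions of closed, compact, and accumulation point together with the Bolzano--Weierstrass-style reasoning already built into the compactness definition.

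\textbf{Part (a).} To show $K$ is closed I must show that every convergent fluxion $x \in \dot{K}$ converges to a point of $K$. So suppose $x \in \dot{K}$ and $x \to a$ with $a$ real. By compactness, $x$ has a convergent subfluxion $u \subset x$ converging to a point $b \in K$. But a subfluxion of a convergent fluxion converges to the same limit (this was noted in the excerpt, in the discussion of $\limsup$ and $\liminf$), so $b = a$, and hence $a = b \in K$. Therefore $K$ is closed.

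\textbf{Part (b).} Let $F \subset K$ be closed. To show $F$ is compact, take any fluxion $x \in \dot{F}$. By Lemma part~(a) (with $F \subset K$), $\dot{F} \subset \dot{K}$, so $x \in \dot{K}$; by compactness of $K$ there is a subfluxion $u \subset x$ with $u \to a$ for some $a \in K$. Since $u$ is obtained from a sequence whose terms all lie in $F$ (as $x \in \dot{F}$, its defining sequence is eventually in $F$, so the subsequence is eventually in $F$ too), we have $u \in \dot{F}$. Because $F$ is closed and $u \in \dot{F}$ converges to $a$, we get $a \in F$. Thus every fluxion in $\dot{F}$ has a subfluxion converging to a point of $F$, so $F$ is compact.

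\textbf{Part (c).} Let $E \subset K$ be infinite. The idea is to manufacture a fluxion in $\dot{K}$ out of $E$ whose convergent subfluxion witnesses an accumulation point. Since $E$ is infinite, I can choose a sequence $(x_n)$ of \emph{distinct} points of $E$ (distinctness is the key point, and uses only that $E$ is infinite); let $x = [x_n]$, so $x \in \dot{E} \subset \dot{K}$. By compactness of $K$, there is a subfluxion $u \subset x$ with $u \to a$ for some $a \in K$. Now $u$ is defined by a subsequence $(x_{n_k})$ of distinct points of $E$; I need to argue that $u$ avoids $a$, i.e.\ that $x_{n_k} = a$ for only finitely many $k$. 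But since the $x_{n_k}$ are distinct, at most one of them can equal $a$, so certainly $x_{n_k} \ne a$ for all sufficiently large $k$, i.e.\ $u$ avoids $a$. Since also $u \to a$, the real point $a$ is an accumulation point of $E$ (and $a \in K$), as required.

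\textbf{Main obstacle.} None of the three parts involves a deep difficulty; the one place that deserves care is part~(c), where one must not merely assert that a limit of points of $E$ is an accumulation point but verify the avoidance condition --- and the clean way to guarantee avoidance is to have chosen the original sequence to consist of distinct points, so that any subsequence hits the candidate limit at most once. The other mild point worth stating explicitly is the fact, used in parts (a) and (b), that a subfluxion of a convergent fluxion has the same limit, which is justified earlier in the text.
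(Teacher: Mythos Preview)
Your proof is correct and follows essentially the same approach as the paper's in all three parts. The only cosmetic difference is in part~(c): you choose a sequence of \emph{distinct} points of $E$, whereas the paper picks any $x \in \dot{E}$ having no constant subfluxion; both devices serve exactly the same purpose of guaranteeing that the convergent subfluxion avoids its limit.
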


\begin{proof}
(a) If $x \in \dot{K}$ converges, then any subfluxion of $x$ also
  converges, and converges to the same limit, which must be in $K$ by
  compactness of $K$. So $K$ contains all its limits, and thus is
  closed.

(b) Suppose that $x \in \dot{F}$. Since $\dot{F} \subset \dot{K}$, by
  compactness of $K$ there is a subfluxion $u \subset x$ such that $u
  \to a$ for some $a \in K$. But $u \in \dot{F}$ and $F$ is closed, so
  in fact $a \in F$, as required.

(c) Pick some $x \in \dot{E}$ such that $x$ has no constant
  subfluxion. This is possible because $E$ is infinite. Since $\dot{E}
  \subset \dot{K}$, by compactness of $K$ there must be some
  subfluxion $u \subset x$ such that $u \to a \in K$. Since $x$ has no
  constant subfluxion, $u$ avoids $a$, and thus $a\in K$ is an
  accumulation point of $E$. (Note that $u \in \dot{E}$ since $x \in
  \dot{E}$.)
\end{proof}

It follows immediately from the Bolzano--Weierstrass theorem that any
closed interval $I=[a,b]$ is compact in $\R$. (Because $I$ is closed,
the limit of a convergent subfluxion of any $x \in \dot{I}$ must lie
in $I$.)

\begin{thm*}[Heine--Borel Theorem]
Let $E$ be a subset of $\R$. Then $E$ is compact if and only if $E$ is
closed and bounded.
\end{thm*}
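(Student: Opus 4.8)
The plan is to prove the two implications separately. For the easy direction, suppose $E$ is compact. Boundedness: if $E$ were unbounded, I would construct a fluxion $x = [x_n] \in \dot{E}$ with $|x_n| > n$ for each $n$; then every subfluxion $u \subset x$ also satisfies $|u| \to \infty$ (one checks $u \to \infty$ or $u \to -\infty$ along any subsequence by the same growth estimate, using Proposition~\ref{le}(b) in the extended sense), so $u$ diverges and cannot converge to a point of $E$, contradicting compactness. Closedness is already established in Proposition~\ref{thm:cmpct-implies-closed}(a), so I would simply cite it.

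For the converse, suppose $E$ is closed and bounded. Let $x \in \dot{E}$ be arbitrary; write $x = [x_n]$. Since $E$ is bounded, there is a real $b > 0$ with $E \subset [-b,b]$, hence $x_n \in [-b,b]$ eventually, so $|x| < b+1$ and $x$ is a finite fluxion. By the Bolzano--Weierstrass theorem, $x$ has a convergent subfluxion $u \subset x$, say $u \to a$ for some real $a$. Now $u \in \dot{E}$ (a subfluxion of a fluxion in $\dot{E}$ still has its terms eventually in $E$, by Lemma (a) applied to the range, or directly from the definition of subfluxion), and $E$ is closed, so $a \in E$. Thus every fluxion in $\dot{E}$ has a subfluxion converging to a point of $E$, which is exactly the definition of compactness.

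I do not expect any serious obstacle here; the substance has been front-loaded into Bolzano--Weierstrass and into Proposition~\ref{thm:cmpct-implies-closed}. The only mild subtlety is the unboundedness argument in the forward direction: I need to make sure that a single fluxion with $|x_n| > n$ genuinely has \emph{no} convergent subfluxion, and the cleanest way to phrase this is that any subfluxion $[x_{n_k}]$ satisfies $|x_{n_k}| > n_k \ge k$, so its magnitude exceeds every real number eventually, hence it is not finite and (every convergent fluxion being finite) cannot converge. A second small point is to confirm that a subfluxion of an element of $\dot{E}$ lies in $\dot{E}$; this is immediate from the definition of $\dot{E}$ together with the fact that a subsequence of an eventually-in-$E$ sequence is again eventually in $E$. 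With those remarks in place the proof is a short assembly of prior results.
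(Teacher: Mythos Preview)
Your proof is correct and follows essentially the same approach as the paper. The only cosmetic difference is in the converse: the paper packages the argument by first noting that $I=[-r,r]$ is compact (via Bolzano--Weierstrass) and then invoking Proposition~\ref{thm:cmpct-implies-closed}(b), whereas you apply Bolzano--Weierstrass directly to $x$ and use closedness of $E$ to place the limit in $E$---which is exactly the content of that proposition's proof unpacked.
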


\begin{proof}
Assume $E$ is compact. Then by Proposition
\ref{thm:cmpct-implies-closed}(a), $E$ is closed. To see that $E$ is
bounded, argue by contradiction. Assuming the contrary, we can find a
fluxion $x=[x_n]$ in $\dot{E}$ such that $|x_n| > n$ for each natural
number $n$. The fluxion $x$ is not finite. No subfluxion of $x$ can
converge to a point of $E$, since convergent fluxions are finite. This
violates the compactness of $E$.

For the converse, assume that $E$ is closed and bounded. Since $E$ is
bounded, we can find some interval $I = [-r,r]$ containing $E$.  Since
$I$ is compact, by Proposition \ref{thm:cmpct-implies-closed}(b) it
follows that $E$ is compact.
\end{proof}

As an application of the last result, we can prove that every
continuous real-valued function of a real variable achieves a maximum
and minimum on any closed interval. For this we first need to observe
that \emph{the continuous image of a compact set must be compact}. For
if $f$ is continuous and $E$ a compact subset of its domain, then put
$F:= f(E)$. If $y = f(x) \in \dot{F}$ for some $x \in \dot{E}$, then
by compactness of $E$ there is a subfluxion $u \subset x$ with $u \to
a$ where $a$ is real and $a \in E$. By continuity, $f(u) \to f(a) \in
F$, and clearly $f(u) \subset f(x)$.

\begin{thm*}[Extreme Values Theorem]
  A continuous real-valued function $f \colon D \to \R$ of a real
  variable attains a maximum and minimum on any closed interval $I =
  [a,b]$ contained in its domain. 
\end{thm*}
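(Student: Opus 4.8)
The plan is to reduce the Extreme Values Theorem to two facts already established: that a continuous image of a compact set is compact (proved just above the statement), and the Heine--Borel theorem, which says a compact subset of $\R$ is closed and bounded. First I would note that the closed interval $I=[a,b]$ is compact, since it is closed and bounded (or directly from Bolzano--Weierstrass, as remarked in the excerpt). Then $F:=f(I)$ is compact, hence closed and bounded in $\R$. Boundedness means $M:=\sup F$ and $m:=\inf F$ exist as real numbers; closedness will be what forces these extrema to actually be \emph{attained}.

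Next I would argue that $M=\sup F$ belongs to $F$. The strategy is to build a fluxion in $\dot F$ converging to $M$ and invoke closedness. Concretely, for each $n\in\N$ the number $M-\tfrac1n$ is not an upper bound of $F$, so there is some $t_n\in F$ with $M-\tfrac1n < t_n \le M$. Since each $t_n\in F=f(I)$, choose $c_n\in I$ with $f(c_n)=t_n$, and set $x:=[c_n]\in\dot I$ and $y:=f(x)=[t_n]=[f(c_n)]$. The estimate $M-\tfrac1n < t_n \le M$ shows $|y-M|=[\,|t_n-M|\,] < [\,1/n\,]$, which is infinitesimal, so $y\to M$. Because $y\in\dot F$ and $F$ is closed, $M\in F$; that is, $M=f(c)$ for some $c\in I$, so $f$ attains its maximum on $I$. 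The argument for the minimum $m=\inf F$ is identical, using $t_n\in F$ with $m\le t_n < m+\tfrac1n$.

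I do not expect a genuine obstacle here: every ingredient is in place. The one point needing a little care is the passage from ``$M-\tfrac1n$ is not an upper bound, for every $n$'' to an actual sequence $(t_n)$ in $F$ and then to a sequence $(c_n)$ in $I$ with $f(c_n)=t_n$ — this is just choosing, for each $n$, one element witnessing the failure of the bound and one preimage under $f$. After that, forming the fluxions $x=[c_n]$ and $f(x)=[t_n]$ and checking $f(x)\to M$ via $|f(x)-M|<[1/n]$ is routine, and closedness of $F$ (from Heine--Borel) finishes it. If one prefers to avoid even naming $c_n$, one can instead apply Bolzano--Weierstrass to $x=[c_n]$ to extract a convergent subfluxion $u\to c\in I$, use continuity to get $f(u)\to f(c)$, and note $f(u)\subset f(x)=y\to M$ forces $f(c)=M$ by uniqueness of limits; I would mention this variant but present the direct ``closedness of $f(I)$'' route as the main proof.
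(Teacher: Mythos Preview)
Your proposal is correct and follows essentially the same route as the paper: show $f(I)$ is compact via the continuous-image-of-compact observation, invoke Heine--Borel to get closed and bounded, use boundedness to define $M=\sup f(I)$ and $m=\inf f(I)$, and then build a fluxion in $\dot{f(I)}$ converging to each extremum so that closedness (the paper says ``compactness,'' but it amounts to the same thing) forces $M,m\in f(I)$. Your version is simply more explicit about the construction of the approximating sequence and the optional passage through preimages $c_n$, which the paper omits.
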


\begin{proof}
Put $K = f(I)$. Since $K$ is compact, it is closed and bounded by the
Heine--Borel theorem. Boundedness implies the existence of the real
numbers $m:= \inf K$ and $M:= \sup K$. We need to show that both $m$
and $M$ belong to $K$. But surely there is a sequence of points in $K$
approaching $m$. By compactness, this forces $m \in K$. Similarly for
$M$.
\end{proof}

From this one can derive the mean value theorem and its various
consequences (\emph{e.g.}, Taylor's theorem). 

One could go on to treat the topology of metric spaces similarly, in
particular extending the results of this section to subsets of $\R^n$
and $\C^n$.  If $X$ is a metric space with metric $d$, one would
introduce the set $\dot{X}$, consisting of fluxions taking values in
$X$, which would be defined as equivalence classes of sequences in
$X^\N$ under eventual equality. The object $\dot{X}$ thus produced is
not in general a ring, but that is of no consequence.  The main point
is that the metric $d$ on $X$ extends to elements of $\dot{X}$, and
thus one could define, for example, convergence of a fluxion $x \in
\dot{X}$ to a point $a \in X$ to occur if and only if $d(x,a)$ is an
infinitesimal real fluxion.  The definitions of topological notions in
$X$ would then take precisely the same form as those given in this
section.  We leave the details to the interested reader.

\section{Postscript}\noindent
Let us justify the description of fluxions as germs of real-valued
sequences.

Regard the set $\N$ of natural numbers as a topological space with the
cofinite topology. In this topology, the open sets are precisely the
complements of finite sets. Let $\widehat{\N} = \N \cup \{\infty\}$ be
the one-point compactification of $\N$. The neighborhoods of $\infty$
in $\widehat{\N}$ are the subsets of the form $E \cup \{\infty\}$,
where $E$ is the complement of a finite subset of $\N$.

Now recall a standard general construction in topology.  Given a point
$x$ of a topological space $X$, and two maps $f, g \colon X \to Y$
(where $Y$ is any set), then $f$ and $g$ define the same germ at $x$
if there is a neighborhood $U$ of $x$ such that the restrictions of
$f$ and $g$ to $U$ coincide. Defining the same germ at $x$ gives an
equivalence relation on the space $Y^X$ of functions from $X$ into
$Y$. The resulting equivalence classes are called \emph{germs}.

Thus we see that the ring $\dot{\R}$ may be identified with the set of
germs of real-valued sequences (extended to $\widehat{\N}$) at
infinity.

\end{document}